\newtheorem{thm}{Theorem}[section]
\newtheorem{prop}[thm]{Proposition}
\newtheorem{lemma}[thm]{Lemma}
\newtheorem{conj}[thm]{Conjecture}
\theoremstyle{definition}
\theoremstyle{remark}
\newtheorem{rmk}[thm]{Remark}
\numberwithin{equation}{section}
\newcommand{\F}{\mathbb F}
\newcommand{\Z}{\mathbb Z}
\newcommand{\G}{\mathbb G}
\newcommand{\Spec}{\operatorname{Spec}}
\newcommand{\Br}{\operatorname{Br}}
\newcommand{\A}{\mathbb A}
\newcommand{\cl}{\overline}
\newcommand{\set}[1]{\left\{#1\right\}}
\renewcommand{\phi}{\varphi}
\newcommand{\on}[1]{\operatorname{#1}}
\newcommand{\ang}[1]{\left \langle{#1}\right \rangle}
\title[Massey Vanishing for fourfold Massey products modulo $2$]{The Massey Vanishing Conjecture for fourfold Massey products modulo $2$}
\address{Department of Mathematics\\
	University of California\\
	Los Angeles, CA 90095 \\United States of America}
\author{Alexander Merkurjev}
\email{merkurev@math.ucla.edu}
\author{Federico Scavia}
\email{scavia@math.ucla.edu}
\thanks{The first author
	was supported by the NSF grant DMS \#1801530.}
\date{January 2023}
\subjclass[2020]{12G05; 55S30, 11E04}
\begin{document}

\begin{abstract}
	We prove the Massey Vanishing Conjecture for $n=4$ and $p=2$. That is, we show that for all fields $F$, if a fourfold Massey product modulo $2$ is defined over $F$, then it vanishes over $F$.
\end{abstract}

\maketitle

\section{Introduction}

Let $(A,\partial)$ be a differential graded ring, that is, a cochain complex equipped with a graded associative product satisfying the Leibniz rule with respect to the differential $\partial$, and let $H^*(A)$ be the cohomology ring of $A$. For all integers $n\geq 2$ and all $a_1,\dots,a_n\in H^1(A)$, one may define the $n$-fold Massey product $\ang{a_1,\dots,a_n}$: it is a certain subset of $H^2(A)$. For $n=2$, the Massey product $\ang{a_1,a_2}$ is equal to the singleton $\set{a_1a_2}$, but for $n\geq 3$ the Massey product $\ang{a_1,\dots,a_n}$ can be empty or contain more than one element. One says that $\ang{a_1,\dots,a_n}$ is defined if it is non-empty, and that it vanishes if it contains $0$. (See the introduction of \cite{harpaz2019massey} for the precise definition of Massey product, which will not be needed in this paper.) We have the following implications:
\[\text{$\ang{a_1,\dots,a_n}$ vanishes} \Rightarrow \text{$\ang{a_1,\dots,a_n}$ is defined} \Rightarrow a_ia_{i+1}=0\quad (i=1,\dots,n-1).\]
Massey \cite{massey1958higher} introduced Massey products in Algebraic Topology; in this case $A$ is the singular cochain complex of a topological space. Massey proved that the Borromean rings are not equivalent to three unlinked circles by showing that the singular cochain complex of the complement of the Borromean rings in $\mathbb{R}^3$ admits a non-trivial triple Massey product. 

In this paper, we consider Massey products in Galois cohomology. Let $p$ be a prime number, $\Gamma$ be a profinite group and $A\coloneqq C^*(\Gamma,\Z/p\Z)$ be the differential graded $\F_p$-algebra of mod $p$ continuous cochains of $\Gamma$. We write $H^*(\Gamma,\Z/p\Z)$ for the cohomology algebra $H^*(A)$. When $\Gamma$ is the absolute Galois group of a field $F$, we will write $H^*(F,\Z/p\Z)$ for $H^*(\Gamma,\Z/p\Z)$.

Let $n\geq 2$ be an integer, $U_{n+1}\subset \on{GL}_{n+1}(\F_p)$ be the subgroup of upper unitriangular matrices, and $Z_{n+1}\subset U_{n+1}$ be the subgroup generated by the matrix $Z$ having $1$ in each diagonal entry and in the entry $(1,n+1)$ and $0$ elsewhere. Then $Z_{n+1}\simeq \Z/p\Z$ is the center of $U_{n+1}$. We let $\cl{U}_{n+1}\coloneqq U_{n+1}/Z_{n+1}$; one may think of $\cl{U}_{n+1}$ as the subgroup of upper unitriangular matrices with top-right corner removed. We obtain the following diagram of groups
\[
\begin{tikzcd}
    1 \arrow[r] & \Z/p\Z \arrow[r,"\iota"] & U_{n+1} \arrow[r] \arrow[dr,"\varphi"] & \cl{U}_{n+1} \arrow[r]\arrow[d,"\cl{\varphi}"] & 1 \\
    &&& (\Z/p\Z)^n,
\end{tikzcd}
\]
where the row is a central short exact sequence, $\iota(1+p\Z)=Z$, the surjective homomorphism $\varphi$ forgets the entries of all upper diagonals of an upper unitriangular matrix except for the first one, and $\cl{\varphi}$ is induced by $\varphi$. 

Let $\chi_1,\dots,\chi_n\in H^1(\Gamma,\Z/p\Z)=\on{Hom}_{\on{cont}}(\Gamma,\Z/p\Z)$, and write $\chi$ for the group homomorphism $(\chi_1,\dots,\chi_n)\colon \Gamma\to (\Z/p\Z)^n$. Dwyer \cite{dwyer1975homology} proved that the Massey product $\ang{\chi_1,\dots,\chi_n}\subset H^2(\Gamma,\Z/p\Z)$
\begin{itemize}
    \item[--] is defined if and only if $\chi$ lifts to $\cl{U}_{n+1}$, i.e., $\chi=\cl{\varphi}\circ\chi'$ for some homomorphism $\chi'\colon \Gamma\to \cl{U}_{n+1}$, and
    \item[--] vanishes if and only if $\chi$ lifts to ${U}_{n+1}$, i.e., $\chi=\varphi\circ\chi''$ for some homomorphism $\chi''\colon \Gamma\to U_{n+1}$.
\end{itemize} 
(The reader not familiar with the general definition of Massey product may take the above as the definitions of the phrases ``the Massey product is defined'' and ``the Massey product vanishes.'') 

In contrast with the situation in Algebraic Topology, Hopkins--Wickelgren \cite{hopkins2015splitting} showed that, if $F$ is a number field, all triple Massey products in $H^*(F,\Z/2\Z)$ vanish as soon as they are defined. This result was extended to
all fields $F$ by Min\'{a}\v{c}--T\^{a}n \cite{minac2015triple, minac2016triple}. It motivated the following conjecture, known as the Massey Vanishing Conjecture, which first appeared in \cite{minac2017triple} under an assumption on roots of unity, then in general in \cite{minac2016triple}. 

\begin{conj}[Min\'{a}\v{c}--T\^{a}n]\label{massey-conj}
	For every field $F$, every prime $p$, every integer $n\geq 3$ and all $\chi_1,\dots,\chi_n\in H^1(F,\Z/p\Z )$, if the Massey product $\ang{\chi_1,\dots,\chi_n}\in H^2(F,\Z/p\Z)$ is defined, then it vanishes.
\end{conj}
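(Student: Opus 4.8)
The plan is to recast everything through Dwyer's dictionary. Writing $\Gamma_F$ for the absolute Galois group of $F$ and $\chi=(\chi_1,\dots,\chi_n)\colon\Gamma_F\to(\Z/p\Z)^n$, the hypothesis that $\ang{\chi_1,\dots,\chi_n}$ is defined says exactly that $\chi$ lifts to a continuous homomorphism $\cl\chi\colon\Gamma_F\to\cl U_{n+1}$, and the desired conclusion says that $\chi$ lifts all the way to $U_{n+1}$. Fixing the class $e\in H^2(\cl U_{n+1},\Z/p\Z)$ of the central row $1\to\Z/p\Z\to U_{n+1}\to\cl U_{n+1}\to1$, the obstruction to lifting a given $\cl\chi$ to $U_{n+1}$ is the pullback $\cl\chi^*(e)\in H^2(F,\Z/p\Z)$, and the Massey product is precisely the set $\set{\cl\chi^*(e)}$ as $\cl\chi$ ranges over all lifts of $\chi$. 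Thus the whole conjecture becomes: once this set is non-empty, it contains $0$; equivalently, starting from one lift $\cl\chi$, I want to deform it — within the lifts of the \emph{same} $\chi$ — into one whose obstruction class dies.

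The tool I would reach for is a splitting variety. The lifts of $\chi$ through $\cl U_{n+1}$ differ by $1$-cocycles valued in the interior unipotent group $\cl V\coloneqq\ker(\cl U_{n+1}\to(\Z/p\Z)^n)$, and those through $U_{n+1}$ by cocycles valued in $V\coloneqq\ker(U_{n+1}\to(\Z/p\Z)^n)$. Passing to a versal homomorphism into $(\Z/p\Z)^n$ (realized over the function field of an approximation to the classifying space of $(\Z/p\Z)^n$), I would build smooth $F$-varieties $X$ and $Y$ parametrizing the lifts to $U_{n+1}$ and to $\cl U_{n+1}$ respectively, together with the forgetful map $X\to Y$. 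The defined hypothesis furnishes a point of $Y(F)$, and the Massey product vanishes precisely when $X(F)\neq\emptyset$; so the target is a rational-point statement for $X$, and by a versality/specialization argument it should suffice to treat the generic case.

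To organise the geometry I would induct on $n$, filtering $U_{n+1}$ by the central subgroups cut out by the successive upper diagonals and peeling off one central $\Z/p\Z$-extension at a time. Over a separable closure every class in $H^2$ vanishes, so geometrically $X\to Y$ is an iterated affine-space bundle and all the content lies in the descent to $F$. At each stage Dwyer's correspondence reduces the problem to a shorter Massey product together with a single abelian obstruction in $H^2(F,\Z/p\Z)$ attached to a central $\Z/p\Z$-extension; one then tries to use the freedom in the lower-stage lift to force this class to vanish, exploiting the surjectivity of cup products and the Bloch--Kato/norm-residue description of $H^*(F,\Z/p\Z)$ by symbols, exactly as in the known $n=3$ case.

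The step I expect to be the genuine obstacle — and the reason the conjecture remains open in general — is this arithmetic input at the top of the tower. For $n=3$ the group $\cl V$ is abelian and the obstruction collapses to cup products that symbols control directly; but as $n$ grows the Massey operation is a genuinely higher, degree-$(n-1)$ phenomenon that symbols alone do not detect. Indeed $\cl V$ remains abelian exactly when $n\le4$ and becomes non-abelian once $n\ge5$, so for larger $n$ the successive central extensions no longer decouple and the inductive step breaks down. Establishing that the splitting variety $X$ acquires an $F$-point over an \emph{arbitrary} field — rather than over a number field, where a local--global principle is available, or in the present low-length regime, where the cohomology of $U_5$ and the geometry of $X$ can be pinned down by hand — is precisely where a general proof would have to break new ground, and is why the paper confines itself to $n=4$ and $p=2$.
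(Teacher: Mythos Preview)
The statement you were asked to prove is a \emph{conjecture}; the paper does not prove it and does not claim to. What the paper establishes is only the case $n=4$, $p=2$ (Theorem~\ref{mainthm}), so there is no ``paper's own proof'' of Conjecture~\ref{massey-conj} to compare against. Your proposal is, correspondingly, not a proof, and to your credit you say as much: the final paragraph concedes that the inductive step breaks down once $\cl V$ becomes non-abelian and that this is exactly why the paper restricts to $n=4$ and $p=2$. What precedes that admission is a reasonable summary of Dwyer's dictionary and of the splitting-variety philosophy, but the crucial sentence---``one then tries to use the freedom in the lower-stage lift to force this class to vanish, exploiting the surjectivity of cup products and the Bloch--Kato description of $H^*(F,\Z/p\Z)$''---is precisely where a proof would have to occur, and nothing concrete is offered there. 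Bloch--Kato gives a presentation of the cohomology ring; it is not a recipe for killing a specific obstruction class by deforming a lift, and no such recipe is known in general.

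For contrast, the paper's argument in the one case it does treat has an entirely different texture from your outline. It translates the problem via Proposition~\ref{u5-rephrase} into a question about quaternion classes in $\Br(F_{a,d})$, proves a hands-on key result (Proposition~\ref{x-nu}) by explicit residue computations and a specialization argument over $\A^2_F$, and finishes with an Albert-form trick (Proposition~\ref{albert}). There is no induction on $n$, no versal torsor, and no appeal to norm-residue beyond elementary facts about quaternion algebras; the ``freedom in the lift'' is exploited by writing down, quite explicitly, scalars $x,y\in F^\times$ with $(\alpha x,\delta y)=0$.
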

When $p$ is invertible in $F$ and $F$ contains a primitive $p$-th root of unity, by Kummer Theory the characters $\chi_1,\dots,\chi_n$ correspond to scalars $a_1,\dots,a_n\in F^\times$ uniquely determined up to $p$-th powers. One says that $\ang{a_1,\dots,a_n}$ is defined (resp. vanishes) when $\ang{\chi_1,\dots,\chi_n}$ is defined (resp. vanishes). \Cref{massey-conj} then predicts that $\ang{a_1,\dots,a_n}$ vanishes as soon as it is defined.

    \Cref{massey-conj} is in the spirit of the \emph{profinite inverse Galois problem}, i.e, of the fundamental question: Which profinite groups are realizable as absolute Galois groups of fields? Indeed, a historically fruitful approach to the profinite inverse Galois problem has been to give constraints on the cohomology of absolute Galois groups. The most spectacular example of this is the Norm-Residue Theorem (the Bloch--Kato Conjecture), proved by Rost and Voevodsky.
    
    The Norm-Residue Theorem implies, in particular, that $H^*(F,\Z/p\Z)$ is a quadratic algebra: it admits a presentation with generators in degree $1$ and relations in degree $2$. This property is false in general for arbitrary profinite groups, and so gives a way to prove that a profinite group does not arise as the absolute Galois group of a field. 
    
    From this point of view,   \Cref{massey-conj} predicts a new way in which the cohomology of absolute Galois groups is simpler than that of arbitrary profinite groups. Already the $n=3$ case of \Cref{massey-conj} yields remarkable restrictions on the profinite groups which can appear as absolute Galois groups; see for example the work of Efrat \cite{efrat2014zassenhaus} and Min\'{a}\v{c}--T\^{a}n \cite{minac2017counting}.

Since its formulation, \Cref{massey-conj} has motivated a large body of work by many authors. It is known in a number of cases:
\begin{itemize}
\item[--] when $F$ is a number field, $n=3$ and $p=2$, by Hopkins--Wickelgren \cite{hopkins2015splitting};
\item[--] when $F$ is arbitrary, $n=3$ and $p=2$, by Min\'{a}\v{c}--T\^{a}n \cite{minac2015triple, minac2016triple};
\item[--] when $F$ is arbitrary, $n=3$ and $p$ is odd, by Matzri \cite{matzri2014triple}, followed by Efrat--Matzri \cite{efrat2017triple} and Min\'{a}\v{c}--T\^{a}n \cite{minac2016triple};
\item[--] when $F$ is a number field, $n=4$ and $p=2$, by Guillot--Min\'{a}\v{c}--Topaz--Wittenberg \cite{guillot2018fourfold};
\item[--] when $F$ is a number field and $n$ and $p$ are arbitrary, by Harpaz--Wittenberg \cite{harpaz2019massey}.
\end{itemize}
There are also results for specific classes of fields; for example, rigid odd fields \cite{minac2015kernel}. However, when $F$ is an arbitrary field, very little is known beyond the $n=3$ case. 

In this paper, we prove the case $n=4$ and $p=2$ of \Cref{massey-conj}, with no assumptions on $F$.

\begin{thm}\label{mainthm}
	\Cref{massey-conj} is true for $n=4$ and $p=2$. That is, for all fields $F$ and all $\chi_1,\chi_2,\chi_3,\chi_4\in H^1(F,\Z/2\Z)$, if the mod $2$ Massey product $\ang{\chi_1,\chi_2,\chi_3,\chi_4}$ is defined, then it vanishes.
\end{thm}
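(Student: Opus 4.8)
The plan is to reduce the vanishing of the fourfold Massey product $\langle\chi_1,\chi_2,\chi_3,\chi_4\rangle$ to a statement about lifting the homomorphism $\chi = (\chi_1,\chi_2,\chi_3,\chi_4)\colon \Gamma_F \to (\Z/2\Z)^4$ through $\varphi\colon U_5 \to (\Z/2\Z)^4$, using Dwyer's theorem as recalled in the introduction. By hypothesis, $\chi$ already lifts to $\cl U_5$, so we are given a homomorphism $\chi'\colon \Gamma_F \to \cl U_5$ with $\cl\varphi\circ\chi' = \chi$, and we must produce a genuine lift to $U_5$; equivalently, the obstruction class $\partial(\chi')\in H^2(F,\Z/2\Z)$, which is the image of $\chi'$ under the connecting map for the central extension $1\to\Z/2\Z\to U_5\to\cl U_5\to 1$, must vanish. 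The central point is that this obstruction class is a $2$-torsion class in the Brauer group $\Br(F)$ (via $H^2(F,\Z/2\Z)\hookrightarrow\Br(F)[2]$), and by Merkurjev's theorem such a class is represented by a product of quaternion algebras; so the task becomes showing that a specific sum of quaternion symbols, built from the entries of $\chi'$, is split.

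\textbf{Main steps.} First I would set up the group theory of $U_5$ and $\cl U_5$ very explicitly: coordinatize an element of $\cl U_5$ by the six entries on the first two superdiagonals (the first superdiagonal gives $\chi_1,\dots,\chi_4$, the second superdiagonal gives three more cochains $b_{13}, b_{24}, b_{35}$), so that a lift $\chi'$ amounts to a choice of $1$-cochains whose coboundaries are prescribed cup products ($\partial b_{13} = \chi_1\cup\chi_2$, etc.), together with the constraint that the two "middle" entries on the third superdiagonal can be chosen so that only the top-right entry obstructs. Then I would write down the obstruction class $\partial(\chi')$ as an explicit degree-$2$ class; after massaging, it should take the form of a sum $\chi_1\cup(\text{something}) + (\text{something})\cup\chi_4$ plus cup products of the intermediate $b$'s, i.e. a class whose vanishing we want to force. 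The key idea — and the crux of the whole argument — is a \emph{splitting variety} or \emph{gluing} construction: one wants to realize the data $(\chi',\text{auxiliary choices})$ as coming from an $F$-point of some explicit variety whose rational points exactly parametrize lifts to $U_5$, and then prove this variety has a rational point whenever $\chi'$ exists, typically by exhibiting it as (an open subset of) a quadric, a Severi--Brauer-type variety, or a product thereof that is forced to be isotropic/split by the constraints coming from $\chi'$ being a homomorphism.

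\textbf{The main obstacle.} The hard part is precisely this last step: passing from "the Massey product is defined" (a statement about cohomology of $\Gamma_F$) to "the Massey product vanishes" without any assumption on $F$ — in particular with no roots of unity available, so without the luxury of Kummer theory turning characters into scalars. Unlike the $n=3$ case, the fourfold Massey product is not a coset of a subgroup of $H^2$, so one cannot simply translate the defining datum $\chi'$; one has to genuinely analyze the torsor under $H^1(F,\Z/2\Z)$ (or rather under the relevant unipotent group) of lifts. I expect the proof to hinge on a careful Galois-descent/specialization argument: construct a smooth $F$-variety $X$ (a tower of fibrations with fibers that are affine spaces, split quadrics, or $\G_m$-torsors arising from the successive central extensions in $U_5$) such that $X(F)\neq\emptyset$ iff $\langle\chi_1,\dots,\chi_4\rangle$ vanishes, show $X(F')\neq\emptyset$ over the extension $F'$ where $\chi'$ "lives", and then descend — or, alternatively, reduce modulo $2$ and argue via the Merkurjev-style decomposition that the relevant class in $\Br(F)[2]$, being a sum of symbols constrained by $\chi'$, must be trivial. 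Controlling the interaction between the two "middle" degrees of freedom in $U_5$ (which have no analogue for $n=3$) and showing they can be chosen compatibly over $F$ itself is where I expect the real work, and likely a genuinely new idea, to be needed.
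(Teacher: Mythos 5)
Your proposal is a plan rather than a proof, and the decisive step is missing: you correctly locate the difficulty (the lifts of $\chi$ to $\cl{U}_5$ do not form a coset, so one must vary the lift, and some new idea is needed to force the obstruction to vanish over $F$ itself), but you then explicitly defer that step (``likely a genuinely new idea\dots to be needed''). That deferred step is the entire content of the paper. Concretely, the paper first makes the obstruction completely explicit: using that the kernel $P$ of $U_5\to U_3\times U_3$ is the induced module $\on{Ind}_{F_{a,d}}^{F}(\Z/2\Z)$, the obstruction attached to a lift $(h,h')$ to $U_3\times U_3$ is not just ``a $2$-torsion Brauer class over $F$, hence a sum of quaternion symbols by Merkurjev's theorem,'' but a \emph{single} quaternion symbol $(\alpha,\delta)$ over the \'etale algebra $F_{a,d}$, with $\alpha\in F_a^\times$, $\delta\in F_d^\times$ norming to $b$ and $c$ modulo squares (\Cref{u5-rephrase}); ``defined'' means $(\alpha,\delta)$ comes from $\Br(F)[2]$, ``vanishes'' means some choice makes it zero, and the allowed variation of the lift is exactly $\alpha\mapsto\alpha x$, $\delta\mapsto\delta y$ with $x,y\in F^\times$. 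The new idea is then \Cref{x-nu}: working over $K=F(x_1,x_2)$ with $f=x_1^2-cx_2^2$ a generic norm from $K_c$, an explicit residue computation on $\A^2_F$ shows $N_{K_a/K}(\alpha f,g)=(d,N_{K_a/K}(h\eta))$ for suitable $g,h,\eta$, and specializing at the $F$-point where $h=\eta$ produces $x\in F^\times$ and $\nu\in F_a^\times$ with $(\alpha x,\delta)=(\alpha x,\nu)$ in $\Br(F_{a,d})$ and $N_{F_a/F}(\alpha x,\nu)=0$; the Albert-form argument (\Cref{albert}) then supplies $y$. None of this is present, even in outline, in your proposal; the ``splitting variety forced to be isotropic by the constraints'' you gesture at is precisely what nobody knows how to do directly (cf.\ \Cref{odd-degree}(2)).

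One smaller but substantive error: you say that with ``no roots of unity available'' one lacks ``the luxury of Kummer theory turning characters into scalars.'' For $p=2$ this is wrong: every field of characteristic $\neq 2$ contains $\pm1$, so $H^1(F,\Z/2\Z)\simeq F^\times/F^{\times 2}$ unconditionally, and the characters $\chi_i$ always correspond to scalars $a,b,c,d\in F^\times$. The paper uses this from the outset (after disposing of characteristic $2$ via the freeness of the maximal pro-$2$ quotient), and your stated reluctance to use it would cut you off from the coordinates in which the whole argument is carried out.
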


The proof of \Cref{mainthm} is different from those of Guillot--Min\'{a}\v{c}--Topaz--Witten\-berg and of Harpaz--Wittenberg in the number field case, as the tools used by them (local-global principles, Brauer-Manin obstruction) are not available over an arbitrary field. 

We sketch the proof of \Cref{mainthm}. If $K$ is a field (or a product of fields) of characteristic different from $2$, and $a,b\in K^\times$, we denote by $\Br(K)$ the Brauer group of $K$, and by $(a,b)\in\Br(K)$ the class of the quaternion algebra corresponding to $a$ and $b$. We also set $K_a\coloneqq K[x_a]/(x_a^2-a)$ and $K_{a,b}\coloneqq (K_a)_b$.

\begin{enumerate}
    \item  Since \Cref{massey-conj} is known when $\on{char}(F)=p$, for the proof of \Cref{mainthm} we may suppose that $\on{char}(F)\neq 2$. Let $a,b,c,d\in F^\times$ such that the mod $2$ Massey product $\ang{a,b,c,d}$ is defined: we must show that $\ang{a,b,c,d}$ vanishes. 
    \item Guillot--Min\'{a}\v{c}--Topaz--Wittenberg showed that $\ang{a,b,c,d}$ vanishes if and only if there exist $\alpha\in F_a^\times$ and $\delta\in F_d^\times$ such that $N_{F_a/F}(\alpha)=b$ in $F^\times/F^{\times 2}$, $N_{F_d/F}(\delta)=c$ in $F^\times/F^{\times 2}$ and $(\alpha, \delta)=0$ in $\Br(F_{a,d})$; see \cite[Theorem A]{guillot2018fourfold} or \Cref{u5-rephrase}(a) below. In the same spirit, we show in \Cref{u5-rephrase}(b) that $\ang{a,b,c,d}$ is defined if and only there exist $\alpha\in F_a^\times$ and $\delta\in F_d^\times$ such that $N_{F_a/F}(\alpha)=b$ in $F^\times/F^{\times 2}$, $N_{F_d/F}(\delta)=c$ in $F^\times/F^{\times 2}$ and $(\alpha, \delta)\in \Br(F_{a,d})$ comes from $\Br(F)[2]$. This reduces \Cref{mainthm} to a problem about Brauer groups.
    
    \item Fix $\alpha$ and $\delta$ as in \Cref{u5-rephrase}(b): our idea is to look for $x,y\in F^\times$ such that $(\alpha x, \delta y)=0$ in $\Br(F_{a,d})$. Note that we are allowed to replace $\alpha$ by $\alpha x$ and $\delta$ by $\delta y$: indeed, $N_{F_a/F}(\alpha x)=bx^2 =b$ in $F^\times/F^{\times 2}$ and similarly $N_{F_d/F}(\delta y)=c$ in $F^\times/F^{\times 2}$. The key new insight to find $x$ and $y$ is \Cref{x-nu}: There exist $x\in F^\times$ and $\nu \in F_a^\times$ such that $(\alpha x,\delta)=(\alpha x,\nu)$ in $\Br(F_{a,d})$ and $N_{F_a/F}(\alpha x,\nu)=0$ in $\Br(F)$. While \Cref{x-nu} is a purely algebraic statement, its proof, which takes up the whole \Cref{key-section}, is quite geometric: it consists of an intricate combination of residue computations and specialization arguments. We explain the main ideas that led us to the formulation and proof of this decisive result in \Cref{ideas}. At the end of the proof of \Cref{x-nu}, it is crucial to specialize at a particular $F$-point at which not all functions are necessarily defined; we recall how this can be done in \Cref{specialize}. 
    \item Once $x$ and $\nu$ as in (3) are found, a previous result of ours yields $y\in F^\times$ such that $(\alpha x,\nu y)=0$ in $\Br(F_a)$; see \cite[Proposition 4.4]{merkurjev2022degenerate} or \Cref{albert} below. The proof of \Cref{albert}, recalled in this paper for completeness, uses quadratic form theory, in particular, the theory of Albert forms attached to biquaternion algebras.
    \item The combination of (3) and (4) yields $x,y\in F^\times$ such that $(\alpha x,\delta y)=0$ in $\Br(F_{a,d})$. By (2), the Massey product $\ang{a,b,c,d}$ vanishes, as desired.
\end{enumerate}

We conclude this Introduction by discussing the relation between the present article and our previous work \cite{merkurjev2022degenerate}. In \cite[Theorem 1.3]{merkurjev2022degenerate}, we had proved \Cref{massey-conj} when $n=4$ and $p=2$ under the assumption that $\chi_1=\chi_4$. Therefore \Cref{mainthm} strengthens \cite[Theorem 1.3]{merkurjev2022degenerate}. However, \Cref{mainthm} does not allow us to recover \cite[Theorems 1.4 and 1.6]{merkurjev2022degenerate}: in particular, it does not allow us to give a negative answer to Positselski's question about non-formality of the continuous cochains in the presence of all roots of unity. The proofs of \Cref{mainthm} and \cite[Theorem 1.3]{merkurjev2022degenerate} are independent, except for the use of one common ingredient, namely the aforementioned \Cref{albert}.

As a corollary of \Cref{mainthm} and \cite[Corollary 3.9]{merkurjev2022degenerate}, we prove that a fourfold Massey product modulo $2$ vanishes over $F$ if and only if it is vanishes over an odd-degree field extension of $F$; see \Cref{odd-degree}. It is not at all clear how to prove this directly, and also whether a similar property should be true for $n>4$.

\subsection*{Notation}
Let $F$ be a field (more generally, a product of finitely many fields) of characteristic different from $2$. We let $F^\times$ be the group of invertible elements in $F$, $H^*(F,\Z/2\Z)$ be the Galois cohomology ring of $F$ with $\Z/2\Z$ coefficients, that is, the \'etale cohomology ring of the constant sheaf $\Z/2\Z$ on $\Spec(F)$, and $\Br(F)\coloneqq H^2(F,\mathbb{G}_{\on{m}})$ be the Brauer group of $F$. The Kummer sequence
\[1\to \mu_2\to \mathbb{G}_{\on{m}}\xrightarrow{\times 2} \mathbb{G}_{\on{m}}\to 1\]
induces isomorphisms $H^1(F,\Z/2\Z)\simeq F^{\times}/F^{\times 2}$ and $H^2(F,\Z/2\Z)\simeq \Br(F)[2]$. For all $a\in F^\times$, we let $(a)\in H^1(F,\Z/2\Z)$ be the class represented by $a$, and for all $a_1,\dots,a_n$, we let $(a_1,\dots,a_n)\coloneqq (a_1)\cup\dots\cup(a_n)\in H^n(F,\Z/2\Z)$. By \cite[Proposition 4.7.1]{gille2017central}, for all $a,b\in F^\times$ the image of $(a,b)\in H^2(F,\Z/2\Z)$ in $\Br(F)[2]$ is the Brauer class of the quaternion algebra corresponding to $a$ and $b$, and we denote it by $(a,b)\in \Br(F)$. 

If $E$ is an \'etale $F$-algebra, we write $N_{E/F}\colon E^\times \to F^\times$ for the norm homomorphism and $N_{E/F}\colon \Br(E)\to \Br(F)$ for the corestriction homomorphism. For all $A\in \Br(F)$, we write $A_{E}\in \Br(E)$ for the base change of $A$ to $E$.

For all $a_1,\dots,a_n\in F^\times$, we write $F_{a_1,\dots,a_n}$ for the \'etale $F$-algebra \[F[x_1,\dots,x_n]/(x_1^2-a_1,\dots,x_n^2-a_n),\] and we set $\sqrt{a_i}\coloneqq x_i$ for all $i=1,\dots,n$.

If $F$ is a field, an $F$-variety is a separated integral $F$-scheme of finite type. If $X$ is an $F$-variety, we denote by $F(X)$ its function field and, if $P\in X$, we denote by $O_{X,P}$ the local ring of $X$ at $P$ and by $F(P)$ the residue field of $X$ at $P$.

\section{Preliminaries}

In this section, we let $F$ be a field of characteristic different from $2$ and $\Gamma_F$ be the absolute Galois group of $F$.

\subsection{Fourfold Massey products}

\begin{prop}\label{u5-rephrase}
	Let $a,b,c,d\in F^\times$. 
	\begin{enumerate}
		\item[(a)] The Massey product $\langle a,b,c,d\rangle$ vanishes if and only if there exist $\alpha\in F_a^\times$, $\delta\in F_d^\times$ such that $N_{F_a/F}(\alpha)=b$ in $F^\times/F^{\times 2}$, $N_{F_d/F}(\delta)=c$ in $F^\times/F^{\times 2}$ and $(\alpha,\delta)=0$ in $\on{Br}(F_{a,d})$.
		\item[(b)]  The Massey product $\langle a,b,c,d\rangle$ is defined if and only if there exist $\alpha\in F_a^\times$, $\delta\in F_d^\times$ such that $N_{F_a/F}(\alpha)=b$ in $F^\times/F^{\times 2}$, $N_{F_d/F}(\delta)=c$ in $F^\times/F^{\times 2}$ and $(\alpha,\delta)\in \on{Br}(F_{a,d})$ belongs to the image of $\Br(F)[2]\to \Br(F_{a,d})[2]$.
	\end{enumerate}
\end{prop}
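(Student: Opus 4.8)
The plan is to interpret both statements in terms of lifting the homomorphism $\chi = (\chi_1,\chi_2,\chi_3,\chi_4)\colon \Gamma_F \to (\Z/2\Z)^4$ through the groups $U_5$ and $\cl{U}_5$, as provided by Dwyer's theorem, and then to translate those lifting conditions into the stated concrete conditions on norms and Brauer classes. Part (a) is exactly \cite[Theorem A]{guillot2018fourfold}, so the real work is part (b), which should be proved in parallel with (a) by the same bookkeeping. First I would fix the combinatorics of $U_5$: a homomorphism $\Gamma_F \to U_5$ amounts to a system of continuous cochains $(u_{ij})$ with $u_{i,i+1} = \chi_i$ fixed and the cocycle-type relations $\partial u_{ij} = -\sum_{i<k<j} u_{ik}\cup u_{kj}$ coming from the group law; lifting to $\cl{U}_5$ means the same data \emph{except} the top-right entry $u_{15}$ is dropped, so one only needs the $(2,5)$, $(1,4)$ and intermediate relations to hold, while the obstruction to filling in $u_{15}$ is precisely the class of $\partial$ of the putative top entry, which is the Massey product element.

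The key reduction, already used implicitly in \cite{guillot2018fourfold}, is that choosing the first super-diagonal entries is forced ($u_{i,i+1}=\chi_i$), and choosing the second super-diagonal entries $u_{13}, u_{24}, u_{35}$ subject to $\chi_i\cup\chi_{i+1}=0$ is equivalent to choosing cochains whose coboundaries kill those cup products; the genuinely new data are the third super-diagonal entries $u_{14}, u_{25}$. I would package the pair $(u_{13},u_{14})$ along the top two-by-two-ish block as the datum of an element $\alpha\in F_a^\times$ with $N_{F_a/F}(\alpha)=b$ in $F^\times/F^{\times2}$ — this is the standard identification (via a corestriction / Shapiro argument) of "ways to make $\ang{a,b}$ part of a defined longer product" with norm-one-type data on the quadratic extension $F_a=F(\sqrt a)$ — and symmetrically package $(u_{35},u_{25})$ as $\delta\in F_d^\times$ with $N_{F_d/F}(\delta)=c$. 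With $\alpha,\delta$ chosen, the remaining obstruction to completing a lift to $\cl{U}_5$ (resp. to $U_5$) is a single class in $H^2(F_{a,d},\Z/2\Z) = \Br(F_{a,d})[2]$, and a direct residue/cochain computation identifies it with $(\alpha,\delta)$. The last point is the dichotomy: lifting all the way to $U_5$ forces $(\alpha,\delta)=0$ in $\Br(F_{a,d})$, giving (a); lifting only to $\cl{U}_5$ forces merely that $(\alpha,\delta)$ \emph{dies after} the obstruction to filling $u_{15}$ is accounted for — equivalently, that $(\alpha,\delta)$ lies in the image of the inflation/base-change map $\Br(F)[2]\to\Br(F_{a,d})[2]$, because the $u_{15}$-entry, being a cochain on $\Gamma_F$ itself, can only modify the obstruction class by something defined over $F$. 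That yields (b).

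The main obstacle I anticipate is making the identification "$(u_{13},u_{14})\leftrightarrow \alpha\in F_a^\times$ with $N_{F_a/F}(\alpha)\equiv b$" fully precise and compatible on both sides ($F_a$ and $F_d$) and at the same time tracking that the leftover obstruction is \emph{exactly} $(\alpha,\delta)$ in $\Br(F_{a,d})$ and not a twist of it; this requires a careful choice of cocycle representatives and a Shapiro-lemma manipulation, and it is the part where \cite{guillot2018fourfold} did the heavy lifting for case (a). Once that dictionary is set up, deducing (b) from it is comparatively formal: the only difference between "defined" and "vanishes" at the level of $U_5$ versus $\cl{U}_5$ is whether one is allowed to absorb an $F$-rational correction term coming from the missing top-right entry, which is precisely the coset $\Br(F)[2]$ appearing in the statement. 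I would therefore present (a) and (b) together, doing the cochain analysis once and reading off the two conclusions from whether the $(1,5)$-entry is present.
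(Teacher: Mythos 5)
Your overall strategy coincides with the paper's: both arguments run through Dwyer's theorem, encode a lift of $(\chi_a,\chi_b,\chi_c,\chi_d)$ to $U_3\times U_3$ by the pair $(\alpha,\delta)$, identify the obstruction to lifting further with the class $(\alpha,\delta)\in\Br(F_{a,d})[2]$ via a Shapiro-type isomorphism, and read off (a) versus (b) from whether the central $(1,5)$-entry is available. Your closing observation for (b) --- that discarding $u_{15}$ relaxes ``$(\alpha,\delta)=0$'' to ``$(\alpha,\delta)$ lies in the image of $\Br(F)[2]\to\Br(F_{a,d})[2]$'' because the extra freedom lives over $F$ --- is precisely the content of \Cref{u3u3}, which the paper proves via the long exact cohomology sequence attached to the central extension $1\to\Z/2\Z\to P\to\cl{P}\to 1$.

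There is, however, one concrete misstep in your bookkeeping which, taken literally, would derail the argument. You propose to package $(u_{13},u_{14})$ as $\alpha$ and $(u_{35},u_{25})$ as $\delta$. In the dictionary actually used (in \cite{guillot2018fourfold} and in the paper), $\alpha\in F_a^\times$ with $N_{F_a/F}(\alpha)\equiv b$ encodes the entry $u_{13}$ \emph{alone} (a lift of $(\chi_a,\chi_b)$ to $U_3$, i.e.\ the Galois $U_3$-algebra $(F_{a,b})_\alpha$), and $\delta$ encodes $u_{35}$ alone. The four remaining entries $u_{14},u_{15},u_{24},u_{25}$ together constitute the kernel $P$ of $U_5\to U_3\times U_3$: this subgroup is normal, and as a $\Gamma_F$-module it is $\on{Ind}_{F_{a,d}}^{F}(\Z/2\Z)$, which is exactly what makes the leftover obstruction a single class in $H^2(F,P)\cong\Br(F_{a,d})[2]$. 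If you instead absorb $u_{14}$ into $\alpha$ and $u_{25}$ into $\delta$, the residual entries $u_{24},u_{15}$ do not span a normal subgroup of $U_5$ (conjugating the elementary unitriangular matrix with a $1$ in position $(2,4)$ by the one with a $1$ in position $(1,2)$ produces a nonzero $(1,4)$-entry), so there is no well-defined single obstruction class for that grouping and the identification with $\Br(F_{a,d})[2]$ is lost. The repair is exactly the paper's choice of $P$; with that correction your sketch becomes the paper's proof, including the reliance on the earlier computation (from \cite{merkurjev2022degenerate}) that the obstruction class is $(\alpha,\delta)$ on the nose.
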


\begin{proof}
	(a) This is a reformulation of \cite[Theorem A]{guillot2018fourfold}. For a direct proof, see \cite[Corollary 3.12]{merkurjev2022degenerate}.
	
	(b) Let $U_5\subset \on{GL}_5(\F_2)$ be the subgroup of upper unitriangular matrices. The center of $U_5$ is isomorphic to $\Z/2\Z$ and is generated by the matrix 
	\[\begin{bmatrix}
		1 & 0 & 0 & 0 & 1 \\
		& 1 & 0 & 0 & 0 \\
		&   & 1 & 0 & 0 \\
		&   &   & 1 & 0 \\
		&   &   &   & 1
	\end{bmatrix}.\]
	Let $P$ be the normal subgroup of $U_5$ given by 
	\[P\coloneqq \begin{bmatrix}
		1 & 0 & 0 & * & * \\
		& 1 & 0 & * & * \\
		&   & 1 & 0 & 0 \\
		&   &   & 1 & 0 \\
		&   &   &   & 1
	\end{bmatrix}.\]
	Note that $P$ is the kernel of the surjective homomorphism $U_5\to U_3\times U_3$ which forgets the $2\times 2$ upper-right square of a unitriangular matrix. The center of $U_5$ is contained in $P$. If we write $\cl{U}_5$ and $\cl{P}$ for the quotient of $U_5$ and $P$ by the center of $U_5$, respectively, we obtain a commutative diagram
	\begin{equation}\label{cocycle-diag}
	\begin{tikzcd}
	& 1\arrow[d]  & 1 \arrow[d]\\
	& \Z/2\Z \arrow[r,equal] \arrow[d]  & \Z/2\Z \arrow[d] \\	
	1 \arrow[r] & P \arrow[d] \arrow[r] & U_5 \arrow[d] \arrow[r] & U_3\times U_3 \arrow[r] \arrow[d, equal] & 1 \\
	1 \arrow[r] & \cl{P}\arrow[d] \arrow[r] & \cl{U}_5\arrow[d] \arrow[r] & U_3\times U_3 \arrow[r] & 1\\
	& 1 & 1
	\end{tikzcd}
	\end{equation}
	where the rows and columns are short exact sequences. The rows endow $P$ and $\cl{P}$ with the structure of $(U_3\times U_3)$-modules, and the quotient map $P\to \cl{P}$ is $(U_3\times U_3)$-equivariant. We have a surjection $U_3\times U_3\to (\Z/2\Z)^4$, and the composition $U_5\to U_3\times U_3\to (\Z/2\Z)^4$ is the homomorphism $\varphi$ of the Introduction.
			
	Let $G$ be a finite group. We refer the reader to \cite[Definitions (18.15)]{knus1998book} for the definition of a Galois $G$-algebra. By  \cite[Example (28.15)]{knus1998book}, we have a bijection 
		\begin{equation}\label{galois-alg}
			H^1(F,G)\xrightarrow{\sim}\set{\text{Isomorphism classes of Galois $G$-algebras over $F$}}
		\end{equation}
		which is functorial in the finite group $G$ and the field $F$.
		
		Let $\alpha\in F_a^\times$, $\delta\in F_d^\times$ be such that $N_{F_a/F}(\alpha)=b$ in $F^\times/F^{\times 2}$, $N_{F_d/F}(\delta)=c$ in $F^\times/F^{\times 2}$. We may endow the \'etale algebras $(F_{a,b})_\alpha$ and $(F_{c,d})_{\delta}$ with the structures of Galois $U_3$-algebras as in \cite[\S 3.1]{merkurjev2022degenerate}. We let $h,h':\Gamma_F\to U_3$ be the group homomorphisms corresponding to the Galois $U_3$-algebras $(F_{a,b})_\alpha$ and $(F_{c,d})_{\delta}$ via (\ref{galois-alg}), respectively. Write $\chi_a,\chi_b,\chi_c,\chi_d\colon \Gamma_F\to \Z/2\Z$ for the homomorphism corresponding via  Kummer Theory to $a,b,c,d$, respectively. The composition of $(h,h')\colon \Gamma_F\to U_3\times U_3$ with the homomorphism $U_3\times U_3\to (\Z/2/\Z)^4$ is equal to $(\chi_a,\chi_b,\chi_c,\chi_d)$. Moreover, every lift of  $(\chi_a,\chi_b,\chi_c,\chi_d)$ to $U_3\times U_3$ arises in this way, for a suitable choice of $\alpha\in F_a^\times$ and $\delta\in F_d^\times$.

    \begin{lemma}\label{u3u3}
The homomorphism $(h,h')\colon\Gamma_F\to U_3\times U_3$ lifts to $\cl{U}_5$ if and only if $(\alpha,\delta)$ belongs to the image of $\Br(F)[2]\to \Br(F_{a,d})[2]$.
    \end{lemma}
  
	\begin{proof}
    We showed in \cite[Proof of Proposition 3.11]{merkurjev2022degenerate} that
		\[
		P=\on{Ind}_{F_{a,d}}^{F}(\Z/2\Z),
		\]
		where we view $P$ as a $\Gamma_F$-module via $(h,h')\colon\Gamma_F\to U_3\times U_3$, and that the class of the pullback of the middle row of (\ref{cocycle-diag}) along $(h,h')$ in $H^2(F,P)=H^2(F_{a,d},\Z/2\Z)=\on{Br}(F_{a,d})[2]$ is equal to  $(\alpha,\delta)$.
		
		The left vertical sequence in (\ref{cocycle-diag}) induces a commutative diagram
		\begin{equation}\label{brauer-exact}
		\begin{tikzcd}
		H^2(F,\Z/2\Z) \arrow[d,"\wr"]  \arrow[r] & H^2(F,P)\arrow[d,"\wr"]\arrow[r] & H^2(F,\cl{P})\arrow[d,equal]\\
		\Br(F)[2] \arrow[r] & \Br(F_{a,d})[2] \arrow[r]& H^2(F,\cl{P}),
		\end{tikzcd}
	\end{equation}
		where the rows are exact, the homomorphism $\Br(F)[2]\to \Br(F_{a,d})[2]$ is the pullback map and the homomorphism $\Br(F_{a,d})[2]\to H^2(F,\cl{P})$ is defined by the commutativity of the square on the right.

  The homomorphism $(h,h')\colon\Gamma_F\to U_3\times U_3$ lifts to a homomorphism $\Gamma_F\to \cl{U}_5$ if and only if the pullback of the middle row of (\ref{cocycle-diag}) along $(h,h')$ splits, that is, if and only if the corresponding element in $H^2(F,\cl{P})$ is trivial. By (\ref{brauer-exact}), this happens if and only if the map $\Br(F_{a,d})[2]\to H^2(F,\cl{P})$ sends $(\alpha,\delta)$ to $0$, that is, if and only if $(\alpha,\delta)$ belongs to the image of $\Br(F)[2]\to \Br(F_{a,d})[2]$.
  \end{proof}
		
		We may now complete the proof of (b). Suppose first that  the Massey product $\ang{a,b,c,d}$ is defined. By Dwyer's Theorem \cite{dwyer1975homology} (see also \cite[Theorem 2.4]{merkurjev2022degenerate}), the homomorphism $(\chi_a,\chi_b,\chi_c,\chi_d)\colon \Gamma_F\to (\Z/2\Z)^4$ lifts to a homomorphism $\Gamma_F\to\cl{U}_5$ and hence, in particular, to a homomorphism $(h,h')\colon\Gamma_F\to U_3\times U_3$. The Galois $U_3$-algebras corresponding to $h$ and $h'$ are of the form $(F_{a,b})_{\alpha}$ and $(F_{c,d})_{\delta}$ for some $\alpha\in F_a^\times$ and $\delta\in F_d^\times$ such that $N_{F_a/F}(\alpha)=b$ in $F^\times/F^{\times 2}$ and $N_{F_d/F}(\delta)=c$ in $F^\times/F^{\times 2}$, respectively. By construction $(h,h')$ lifts to $\cl{U}_5$, hence  \Cref{u3u3} implies that $(\alpha,\delta)\in \Br(F_{a,d})[2]$ comes from $\Br(F)[2]$.

  Conversely, suppose that $(\alpha,\delta)$ belongs to the image of $\Br(F)[2]\to \Br(F_{a,d})[2]$. Let $h,h'\colon\Gamma_F\to U_3$ be the homomorphisms corresponding to $(F_{a,b})_{\alpha}$ and $(F_{c,d})_{\delta}$, respectively.  By \Cref{u3u3}, the homomorphism $(h,h')$ lifts to $\cl{U}_5$. On the other hand, $(h,h')\colon\Gamma_F\to U_3\times U_3$ is a lift of $(\chi_a,\chi_b,\chi_c,\chi_d)\colon \Gamma_F\to (\Z/2\Z)^4$. This shows that $(\chi_a,\chi_b,\chi_c,\chi_d)$ lifts to $\cl{U}_5$. By Dwyer's Theorem, the Massey product $\ang{a,b,c,d}$ is defined. 
\end{proof}

\subsection{Quaternion algebras}

Recall that we suppose $\on{char}(F)\neq 2$. We begin with some standard properties of quaternion algebras.

\begin{lemma}\label{cup-norm}
	Let $a,b\in F^{\times}$. The following are equivalent:
	
	(i) $(a,b)=0$ in $\on{Br}(F)$;
	
	(ii) $b\in N_{F_a/F}(F_a^\times)$;
	
	(iii) $a\in N_{F_b/F}(F_b^\times)$.
\end{lemma}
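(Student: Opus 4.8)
The plan is to reduce everything to the classical characterization of split quaternion algebras by norms, disposing of the degenerate cases separately. First, since $(a,b)=(b,a)$ in $\Br(F)$, the implication (ii)$\Leftrightarrow$(iii) follows from (i)$\Leftrightarrow$(ii) applied with the roles of $a$ and $b$ exchanged, so it suffices to prove (i)$\Leftrightarrow$(ii). Next I would handle the case $a\in F^{\times 2}$: then $F_a\simeq F\times F$ is a split étale algebra, $(a,b)=0$ automatically, and $b=N_{F_a/F}(1,b)$, so (i) and (ii) hold trivially (and for (iii), writing $a=c^2$ with $c\in F^\times$ gives $a=c^2=N_{F_b/F}(c)$). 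Symmetrically, the case $b\in F^{\times 2}$ is trivial. So from now on I may assume $a,b\notin F^{\times 2}$, i.e. that $F_a$ and $F_b$ are quadratic field extensions of $F$.

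Now the heart of the matter. Recall the standard dictionary: $(a,b)=0$ in $\Br(F)$ if and only if the quaternion algebra $Q=(a,b)_F$ is split, if and only if its reduced norm form — which is isometric to $\ang{1,-a}\perp(-b)\ang{1,-a}=\ang{1,-a,-b,ab}$ — is isotropic over $F$ (a $4$-dimensional quaternion algebra is either division or isomorphic to $M_2(F)$, and it is division precisely when it has no nonzero element of reduced norm $0$); equivalently, the ternary form $\ang{1,-a,-b}$ is isotropic, i.e. the conic $x^2=ay^2+bz^2$ has a nontrivial $F$-point. I would then derive the equivalence with (ii) by a direct manipulation. If $b=N_{F_a/F}(x+y\sqrt a)=x^2-ay^2$ with $x,y\in F$, then $(x,y,1)$ is a nontrivial isotropic vector of $\ang{1,-a,-b}$, so $Q$ is split. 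Conversely, if $(x,y,z)\neq 0$ satisfies $x^2-ay^2-bz^2=0$, then $z\neq 0$: for if $z=0$ then $x^2=ay^2$, which forces $a\in F^{\times 2}$ (as $y=0$ would give $x=0$ as well), contrary to our assumption. Dividing by $z^2$ gives $b=(x/z)^2-a(y/z)^2=N_{F_a/F}\bigl((x/z)+(y/z)\sqrt a\bigr)$, and this element lies in $F_a^\times$ because its norm $b$ is nonzero.

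There is essentially no serious obstacle here; the statement is classical, and the authors themselves describe it as a standard property of quaternion algebras. The only points requiring a little care are the degenerate cases $a\in F^{\times 2}$ or $b\in F^{\times 2}$ — where the conic argument would break down because the relevant form becomes isotropic "for the wrong reason" — and checking that the element of $F_a$ produced in the converse direction is genuinely invertible; both are handled above. If one prefers not to spell out the split-quaternion/conic correspondence, one may simply cite the equivalence "$(a,b)=0$ in $\Br(F)$ $\iff$ $b\in N_{F_a/F}(F_a^\times)$" (valid for $a\notin F^{\times 2}$) from a standard reference, which is exactly (i)$\Leftrightarrow$(ii); but I find the quadratic-form derivation cleaner and self-contained.
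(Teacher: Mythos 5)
Your proof is correct and follows essentially the same route as the paper, which proves this lemma simply by citing \cite[Proposition 1.1.7]{gille2017central}; the argument there is exactly the norm-form/conic criterion you spell out. Your handling of the degenerate square cases and the invertibility of the element produced in the converse direction is also correct, so there is nothing to add.
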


\begin{proof}
	See \cite[Propositions 1.1.7]{gille2017central}.
\end{proof}

\begin{lemma}\label{kernel-base-change}
Let $a\in F^\times$ and $A\in \Br(F)$. Then $A_{F_a}=0$ in $\Br(F_a)$ if and only if there exists $u\in F^\times$ such that $A=(a,u)$ in $\Br(F)$.
\end{lemma}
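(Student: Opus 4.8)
The plan is to prove the two implications separately; the reverse one is immediate, and the forward one will follow from the classical description of the relative Brauer group of a quadratic extension.

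First I would dispose of the easy direction. If $A=(a,u)$ for some $u\in F^\times$, then $A_{F_a}=(a,u)_{F_a}=\big(a_{F_a},u\big)$, and since $a=(\sqrt a\,)^2$ is a square in $F_a$ the quaternion algebra $\big(a_{F_a},u\big)$ is split, so $A_{F_a}=0$. This also settles the degenerate case $a\in F^{\times2}$ of the whole statement: then $F_a=F\times F$, so $A_{F_a}=0$ forces $A=0$, while $(a,u)=0$ in $\Br(F)$ for every $u$ because $a$ is already a square.

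For the forward direction, assume $A_{F_a}=0$. By the previous paragraph we may assume $a\notin F^{\times2}$, so that $L\coloneqq F_a$ is a quadratic extension of $F$, separable since $\on{char}(F)\neq 2$, hence Galois with $\on{Gal}(L/F)=\langle\sigma\rangle\simeq\Z/2\Z$. The hypothesis says precisely that $A$ lies in the relative Brauer group $\Br(L/F)=\ker\!\big(\Br(F)\to\Br(L)\big)$. I would then invoke the standard fact that, for a cyclic extension, the map sending $u\in F^\times$ to the class of the cyclic algebra $(L/F,\sigma,u)$ induces an isomorphism $F^\times/N_{L/F}(L^\times)\xrightarrow{\sim}\Br(L/F)$; when $[L:F]=2$ this cyclic algebra is exactly the quaternion algebra $(a,u)$. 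Hence $A=(a,u)$ for some $u\in F^\times$, as desired.

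The argument has essentially no hard step: the only substantive input is the surjectivity of $u\mapsto(a,u)$ onto $\Br(L/F)$, i.e.\ that every central simple $F$-algebra split by a quadratic extension is Brauer-equivalent to a quaternion algebra. This is well known and can be extracted from \cite[\S 4.7]{gille2017central}, or deduced from Hilbert's Theorem 90 together with the $2$-periodicity of the cohomology of $\Z/2\Z$ via the Hochschild--Serre isomorphism $\Br(L/F)\simeq H^2(\on{Gal}(L/F),L^\times)$; in the write-up I would simply cite it. One could alternatively give a direct division-algebra argument, realizing an index-$2$ division algebra containing $\sqrt a$ as generated over $F$ by $\sqrt a$ and an element $j$ with $j^2\in F^\times$ and $j\sqrt a=-\sqrt a\,j$, but the cohomological route is shorter.
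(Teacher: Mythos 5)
Your proof is correct and is essentially the paper's own argument: the paper simply cites \cite[Chapter XIV, Proposition 2]{serre1979local}, which is precisely the classical isomorphism $F^\times/N_{L/F}(L^\times)\simeq \Br(L/F)$ for a cyclic extension that you invoke, specialized to the quadratic case where the cyclic algebra $(L/F,\sigma,u)$ is the quaternion algebra $(a,u)$. Your additional care with the easy direction and the split case $a\in F^{\times 2}$ is fine and changes nothing of substance.
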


\begin{proof}
 See \cite[Chapter XIV, Proposition 2]{serre1979local}.
\end{proof}

\begin{lemma}\label{chain-lemma}
	Let $a,b,u,v\in F^{\times}$. Then $(a,u)=(b,v)$ in $\on{Br}(F)$ if and only if there exist $n_a\in N_{F_a/F}(F_a^\times)$, $n_b\in N_{F_b/F}(F_b^\times)$ and $n_{ab}\in N_{F_{ab}/F}(F_{ab}^\times)$ such that $u=n_an_{ab}$ and $v=n_bn_{ab}$.
\end{lemma}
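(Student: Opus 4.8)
The plan is to prove both implications by reducing everything to the characterization of quaternion splitting via norms (Lemma \ref{cup-norm}) together with bilinearity of the symbol $(\cdot,\cdot)$ in $\Br(F)$.

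First I would prove the ``if'' direction, which is the easy one. Suppose $u = n_a n_{ab}$ and $v = n_b n_{ab}$ with $n_a\in N_{F_a/F}(F_a^\times)$, $n_b\in N_{F_b/F}(F_b^\times)$, $n_{ab}\in N_{F_{ab}/F}(F_{ab}^\times)$. Then using bilinearity of the symbol and Lemma \ref{cup-norm}, $(a,n_a)=0$ (since $n_a$ is a norm from $F_a$), so $(a,u)=(a,n_a)+(a,n_{ab})=(a,n_{ab})$; similarly $(b,v)=(b,n_{ab})$. It therefore suffices to check $(a,n_{ab})=(b,n_{ab})$, i.e.\ $(ab,n_{ab})=0$, which holds by Lemma \ref{cup-norm} because $n_{ab}$ is a norm from $F_{ab}$. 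This gives $(a,u)=(b,v)$.

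For the ``only if'' direction, assume $(a,u)=(b,v)$ in $\Br(F)$, so that $(a,u)+(b,v)=(a,u)+(b,v)=0$; rewrite this as $(a,u)=(b,v)$ and hence $(a,uv)=(a,v)+(b,v)=(ab,v)$ after adding $(a,v)$ to both sides — more precisely, I would set $A\coloneqq (a,u)=(b,v)$ and first extract a candidate for $n_{ab}$. The key observation is that $A$ is split by $F_a$ (being a symbol $(a,u)$) and also split by $F_b$ (being $(b,v)$), hence split by the compositum inside $F_{ab}$; so $A_{F_{ab}}=0$. By Lemma \ref{kernel-base-change} applied over... — here one must be slightly careful, since we want to descend to a symbol in $ab$. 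Instead, the clean route: since $A_{F_a}=0$, Lemma \ref{kernel-base-change} gives $A=(a,w)$ for some $w\in F^\times$, and since $A=(a,u)$ we get $(a,uw^{-1})=0$, so $uw^{-1}=n_a$ is a norm from $F_a$; thus $u=n_a w$. Symmetrically $v=n_b w'$ with $w'$ another representative and $n_b$ a norm from $F_b$. The real content is to arrange a \emph{single} element $n_{ab}$, a norm from $F_{ab}$, with $u=n_a n_{ab}$ and $v=n_b n_{ab}$ simultaneously; equivalently, to show $A$ can be written as $(a, n_{ab}) = (b, n_{ab})$ for one common $n_{ab}\in N_{F_{ab}/F}(F_{ab}^\times)$. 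For this I would use that $A_{F_a}=0$ over the field $F_a$: then $A_{F_a}=(b,v)_{F_a}$, and over $F_a$ the extension $F_{ab}=(F_a)_b$ splits $A_{F_a}$ trivially, so $(b,v)_{F_a}=0$; equivalently $v\in N_{(F_a)_b/F_a}((F_a)_b^\times)\cdot$ — hmm, more directly, $v$ becomes a norm from $F_{ab}$ after base change to $F_a$. The standard way to finish (and the step I expect to be the main obstacle) is a corestriction/projection-formula argument: one shows the subgroup of $F^\times/F^{\times2}$ consisting of $u$ with $(a,u)=(b,u)$ — no; the cleanest is to invoke the known presentation of the relative Brauer group or the common-slot lemma. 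I would look for $n_{ab}$ by noting that since $A$ is split by $F_a$ and by $F_b$, writing $A=(a,u)=(b,v)$, the element $(a,v)=(a,v)$ satisfies $(a,v)+(b,v)=(ab,v)$ and also $(a,v)+(a,u)=(a,uv)$; comparing, $A+(a,v)=(ab,v)$ and $A+(a,v)=(a,uv)$, so $(ab,v)=(a,uv)$. This symmetric manipulation, iterated, pins down $n_{ab}\coloneqq v \bmod (\text{norms})$ — the honest statement is that one sets $n_{ab}$ to be the class making $(a,n_{ab})=A$ and checks it is simultaneously a norm from $F_{ab}$ using that $A$ is also $(b,v)$.

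The main obstacle is precisely this last bookkeeping: producing one common $n_{ab}$ that is genuinely a norm from $F_{ab}$, rather than two separate elements $w,w'$. I expect the resolution to cite a standard ``common slot'' or chain-equivalence result for quaternion algebras (e.g.\ that $(a,u)=(b,v)$ implies the two quaternion algebras are chain $p$-equivalent), or to run the short exact sequence $1\to F_a^\times/F^{\times} \to \dots$ of norm maps; in the write-up I would simply reduce to Lemma \ref{cup-norm} and Lemma \ref{kernel-base-change} plus the elementary fact that an element of $F^\times$ that is a norm from $F_a$ and from $F_b$ after suitable twisting can be taken to be a norm from $F_{ab}$, which is exactly the transitivity of the norm through $F\subset F_a\subset F_{ab}$ (and $F\subset F_b\subset F_{ab}$). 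Once $n_{ab}$ is fixed, setting $n_a\coloneqq u\,n_{ab}^{-1}$ and $n_b\coloneqq v\,n_{ab}^{-1}$ and verifying $(a,n_a)=0$, $(b,n_b)=0$ via Lemma \ref{cup-norm} completes the argument.
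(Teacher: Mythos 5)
Your ``if'' direction is correct and complete: bilinearity of the symbol together with \Cref{cup-norm} gives $(a,u)=(a,n_{ab})=(b,n_{ab})=(b,v)$ exactly as you say. For the record, the paper does not prove this lemma in-text at all --- it simply cites \cite[Lemma A.2]{merkurjev2022degenerate} --- so the only question is whether your argument for the converse actually closes.

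It does not: the ``only if'' direction has a genuine gap, and it is exactly the one you flag yourself, namely the production of a \emph{single} element $n_{ab}$. From $A\coloneqq(a,u)=(b,v)$ and \Cref{kernel-base-change} you correctly obtain $A=(a,w)$ and $A=(b,w')$ for two a priori unrelated $w,w'\in F^\times$, i.e.\ $u\in wN_{F_a/F}(F_a^\times)$ and $v\in w'N_{F_b/F}(F_b^\times)$. What is actually needed is a common element $m\in uN_{F_a/F}(F_a^\times)\cap vN_{F_b/F}(F_b^\times)$: for such an $m$ one has $(ab,m)=(a,m)+(b,m)=(a,u)+(b,v)=0$, so $m\in N_{F_{ab}/F}(F_{ab}^\times)$ by \Cref{cup-norm}, and $n_{ab}\coloneqq m$, $n_a\coloneqq um^{-1}$, $n_b\coloneqq vm^{-1}$ finish the proof. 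The existence of such an $m$ is precisely the Common Slot Theorem for quaternion algebras; it is a real theorem, not bookkeeping, and none of the alternatives you gesture at supplies it. In particular, the ``elementary fact'' you invoke is both misattributed and insufficient: transitivity of the norm gives only the inclusion $N_{F_{ab}/F}(F_{ab}^\times)\subseteq N_{F_a/F}(F_a^\times)\cap N_{F_b/F}(F_b^\times)$, which is the wrong direction; the reverse inclusion does hold (again by $(ab,c)=(a,c)+(b,c)$ and \Cref{cup-norm}, not by transitivity), but it concerns the intersection of the two norm \emph{subgroups}, whereas the lemma requires the nonemptiness of the intersection of the two \emph{cosets} $uN_{F_a/F}(F_a^\times)$ and $vN_{F_b/F}(F_b^\times)$. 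To repair the proof, either cite the Common Slot Theorem explicitly, or argue directly with quadratic forms: $(a,u)=(b,v)$ means the Albert form $\ang{a,u,-au,-b,-v,bv}$ of $(a,u)\otimes(b,v)$ is hyperbolic, hence its $4$-dimensional subform $\ang{u,-au,-v,bv}$ is isotropic, producing $m=u(x_1^2-ax_2^2)=v(x_3^2-bx_4^2)$; when $m\neq 0$ it lies in both cosets, and when $m=0$ one of $a,b$ is forced to be a square in $F$, a degenerate case handled by hand since then the corresponding norm group is all of $F^\times$.
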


\begin{proof}
	See \cite[Lemma A.2]{merkurjev2022degenerate}.
\end{proof}

\begin{lemma}\label{comes-from-ac}
			Let $\rho\in F_a^\times$ and $\mu\in F^\times_b$ be such that $N_{F_a/F}(\rho)=N_{F_b/F}(\mu)$. Set $d\coloneqq \on{Tr}_{F_a/F}(\rho)+\on{Tr}_{F_b/F}(\mu)$. Suppose that $d\neq 0$. Then $(\mu,a)=(d,a)$ in $\on{Br}(F_b)$.
		\end{lemma}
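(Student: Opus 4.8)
We are given $\rho \in F_a^\times$ and $\mu \in F_b^\times$ with $N_{F_a/F}(\rho) = N_{F_b/F}(\mu)$, and we set $d = \on{Tr}_{F_a/F}(\rho) + \on{Tr}_{F_b/F}(\mu) \neq 0$; we must show $(\mu, a) = (d, a)$ in $\Br(F_b)$. Equivalently, by bilinearity, we must show $(\mu d, a) = 0$ in $\Br(F_b)$, i.e., by \Cref{cup-norm}, that $\mu d \in N_{(F_b)_a/F_b}((F_b)_a^\times)$, or by the other equivalence, that $a$ is a norm from $(F_b)_d = F_b[\sqrt{d}]$ down to $F_b$. My plan is to produce an explicit element of $(F_b)_{\sqrt a}$ whose norm to $F_b$ equals $\mu d$, built from $\rho$, $\mu$, and $\sqrt a$.

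**The key computation.** The natural candidate is $\rho + \mu + \sqrt{a}\cdot(\text{something})$, exploiting that $\rho$ lives over $F_a$ and $\mu$ over $F_b$, and that $\sqrt a$ generates $(F_b)_a$ over $F_b$. Write $N = N_{F_a/F}(\rho) = N_{F_b/F}(\mu)$ and $r = \on{Tr}_{F_a/F}(\rho)$, $s = \on{Tr}_{F_b/F}(\mu)$, so $d = r + s$. Over $F_b$, I would consider the element $z \coloneqq \rho + \mu \in (F_b)_{\sqrt a}^\times$ (here $\rho$ is pushed into $(F_b)_{\sqrt a} = F_b \otimes_F F_a$ and $\mu \in F_b$). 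Its conjugate over $F_b$ (the nontrivial automorphism sends $\sqrt a \mapsto -\sqrt a$, hence $\rho \mapsto \bar\rho$ while fixing $\mu$) is $\bar\rho + \mu$, so
\[
N_{(F_b)_{\sqrt a}/F_b}(\rho + \mu) = (\rho + \mu)(\bar\rho + \mu) = \rho\bar\rho + \mu(\rho + \bar\rho) + \mu^2 = N + r\mu + \mu^2.
\]
Now $\mu^2 = \mu \cdot \mu$; since $\mu$ satisfies $\mu^2 - s\mu + N' = 0$ over $F$ where $N' = N_{F_b/F}(\mu) = N$, we have $\mu^2 = s\mu - N$. Substituting: $N + r\mu + s\mu - N = (r+s)\mu = d\mu$. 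Hence $N_{(F_b)_{\sqrt a}/F_b}(\rho + \mu) = d\mu$, which is exactly what we want — provided $\rho + \mu \neq 0$ in $(F_b)_{\sqrt a}$, which holds because its norm $d\mu$ is nonzero (using $d \neq 0$ and $\mu \in F_b^\times$). Therefore $d\mu$ is a norm from $(F_b)_{\sqrt a} = (F_b)_a$, so $(d\mu, a) = 0$ in $\Br(F_b)$ by \Cref{cup-norm}, i.e. $(\mu, a) = (d, a)$ in $\Br(F_b)$.

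**Where the care is needed.** The computation itself is short; the only real subtlety is bookkeeping about which algebra each element lives in and making sure $\rho + \mu$ is interpreted correctly inside $(F_b)_a \cong F_b \otimes_F F_a$, together with checking the norm is the product of an element with its $\on{Gal}((F_b)_a/F_b)$-conjugate — which requires that $a$ is not a square in $F_b$, i.e. that $(F_b)_a/F_b$ is a genuine quadratic extension. If $a$ \emph{is} a square in $F_b$, then $(F_b)_a \cong F_b \times F_b$ is split, so $(\mu,a) = 0 = (d,a)$ in $\Br(F_b)$ trivially and there is nothing to prove; so I would dispatch that degenerate case first, then run the norm computation above in the remaining case. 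I expect essentially no obstacle beyond this case distinction and the identification $\mu^2 = s\mu - N$, which is just the characteristic polynomial of $\mu$ over $F$.
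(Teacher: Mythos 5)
Your proof is correct: the identity $N_{(F_b)_a/F_b}(\rho+\mu)=N+r\mu+\mu^2=(r+s)\mu=d\mu$ checks out, and $d\mu\in F_b^\times$ gives $(d\mu,a)=0$ in $\on{Br}(F_b)$ by \Cref{cup-norm}, hence $(\mu,a)=(d,a)$. The paper does not reproduce an argument for this lemma --- it only cites \cite[Lemma A.4(3)]{merkurjev2022degenerate} --- so there is nothing in-text to compare against, but your computation is the natural one and is complete; the only cosmetic remark is that your case distinction for $a$ a square in $F_b$ is not really needed, since the norm of a quadratic \'etale algebra is always $z\bar z$ for the canonical involution, split or not.
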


\begin{proof}
See \cite[Lemma A.4(3)]{merkurjev2022degenerate}.
\end{proof}

\begin{prop}\label{theta-lemma} 
	Let $a,d\in F^\times$, $\pi,\nu\in F_a^\times$, $\rho\in F_d^\times$. 
 Suppose that $(\pi,\rho)=(\pi,\nu)$ in $\on{Br}(F_{a,d})$ and that the corestrictions of $(\pi,\rho)$ to $F_d$ and $F_{ad}$ are trivial. Then $N_{F_a/F}(\pi,\nu)=(d, n_a)$ in $\Br(F)$ for some $n_a\in N_{F_a/F}(F_a^\times)$.
\end{prop}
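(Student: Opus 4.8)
The plan is to leverage the hypotheses via the corestriction exact sequences for the quadratic extensions $F_a/F$, $F_d/F$, and $F_{ad}/F$, together with the norm-compatibility of quaternion symbols. Since $(\pi,\rho)=(\pi,\nu)$ in $\Br(F_{a,d})$, applying corestriction from $F_{a,d}$ down to $F_d$ gives $N_{F_{a,d}/F_d}(\pi,\rho)=N_{F_{a,d}/F_d}(\pi,\nu)$ in $\Br(F_d)$; but $\rho\in F_d^\times$, so by the projection formula $N_{F_{a,d}/F_d}(\pi,\rho)=(N_{F_{a,d}/F_d}(\pi),\rho)=(N_{F_a/F}(\pi)_{F_d},\rho)$, which lies in the image of $\Br(F_d)$ pulled back — actually it equals $(N_{F_a/F}(\pi),\rho)_{F_d}$ only after noting $N_{F_a/F}(\pi)\in F^\times$. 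The point is that $N_{F_{a,d}/F_d}(\pi,\nu)$ then equals a symbol defined over $F$, base-changed to $F_d$. I would similarly corestrict the relation along $F_{a,d}/F_{ad}$. Combined with the vanishing of the corestrictions of $(\pi,\rho)$ to $F_d$ and to $F_{ad}$, this forces $N_{F_{a,d}/F_d}(\pi,\nu)=0$ in $\Br(F_d)$ and $N_{F_{a,d}/F_{ad}}(\pi,\nu)=0$ in $\Br(F_{ad})$.

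Next I would consider the class $A\coloneqq N_{F_a/F}(\pi,\nu)\in\Br(F)$ and track what its vanishing over various extensions tells us. Since $\pi,\nu\in F_a^\times$, the symbol $(\pi,\nu)$ is split over $F_{a,d}$ iff... — more usefully, $A_{F_a}=0$ automatically? No: $N_{F_a/F}(\pi,\nu)$ restricted back to $F_a$ is $(\pi,\nu)+(\pi^\sigma,\nu^\sigma)$ where $\sigma$ generates $\on{Gal}(F_a/F)$, which need not vanish. Instead, the strategy is: show $A_{F_d}=0$ and $A_{F_{ad}}=0$ in $\Br(F_d)$ and $\Br(F_{ad})$ respectively, using the corestriction computation above and the fact that corestriction followed by restriction along an independent quadratic extension behaves predictably (via the double-coset / Mackey formula, $\on{res}_{F_d}\circ\,N_{F_a/F}=N_{F_{a,d}/F_d}\circ\on{res}_{F_{a,d}}$ since $F_a$ and $F_d$ are linearly disjoint over $F$). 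This identifies $A_{F_d}$ with $N_{F_{a,d}/F_d}(\pi,\nu)$, which we showed is $0$; similarly $A_{F_{ad}}=0$.

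Now $A\in\Br(F)$ with $A_{F_d}=0$. By \Cref{kernel-base-change}, there exists $n\in F^\times$ with $A=(d,n)$ in $\Br(F)$. It remains to arrange $n\in N_{F_a/F}(F_a^\times)$, i.e.\ to replace $n$ by an element of the norm group while keeping $A=(d,n)$. The freedom here is that $(d,n)=(d,n')$ iff $n/n'\in N_{F_d/F}(F_d^\times)$, so I need $N_{F_a/F}(F_a^\times)\cdot N_{F_d/F}(F_d^\times)$ to contain $n$ — equivalently, using \Cref{cup-norm}, I must show $A_{F_{ad}}=0$ implies $n$ lies in the subgroup generated by the two norm groups. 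This is exactly where $A_{F_{ad}}=0$ enters: $(d,n)_{F_{ad}}=0$ means $(d,n)=(ad,w)$ for some $w\in F^\times$ by \Cref{kernel-base-change} applied with the extension $F_{ad}/F$, hence $(d,n)+(d,w)+(a,w)=0$, giving $(d,n/w)=(a,w)$; by \Cref{chain-lemma} applied to this equality of symbols there are $n_d\in N_{F_d/F}(F_d^\times)$, $n_a\in N_{F_a/F}(F_a^\times)$, $n_{ad}\in N_{F_{ad}/F}(F_{ad}^\times)$ with $n/w=n_d n_{ad}$ and $w=n_a n_{ad}$, so $n = n_d n_a n_{ad}^2$ and hence $(d,n)=(d,n_a)$ since $n_d, n_{ad}^2\in N_{F_d/F}(F_d^\times)$. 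Thus $A=(d,n_a)$ with $n_a\in N_{F_a/F}(F_a^\times)$, as required. The main obstacle I anticipate is the bookkeeping in the Mackey/double-coset step identifying $\on{res}_{F_d}\circ N_{F_a/F}$ with $N_{F_{a,d}/F_d}\circ\on{res}_{F_{a,d}}$ and the parallel statement over $F_{ad}$, and making sure the hypothesis "corestrictions of $(\pi,\rho)$ to $F_d$ and $F_{ad}$ are trivial" is used precisely where needed — plausibly \Cref{comes-from-ac} or \Cref{theta-lemma}'s own ingredients feed in here to handle the $F_{ad}$ computation, where the relation $(\pi,\rho)=(\pi,\nu)$ must be corestricted along $F_{a,d}/F_{ad}$ and the projection formula applied to the factor $\rho\in F_d^\times\subset F_{a,d}$ (noting $\rho$ is not in $F_{ad}$, so one first writes $\rho$'s norm from $F_{a,d}$ to $F_{ad}$ explicitly).
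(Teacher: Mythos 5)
Your proposal is correct and follows essentially the same route as the paper: corestrict the relation $(\pi,\rho)=(\pi,\nu)$ to deduce $N_{F_a/F}(\pi,\nu)$ dies over $F_d$ and $F_{ad}$, invoke \Cref{kernel-base-change} to write it as $(d,n)$, and then use \Cref{chain-lemma} to adjust $n$ into $N_{F_a/F}(F_a^\times)$ modulo $N_{F_d/F}(F_d^\times)$. The only (immaterial) difference is that the paper applies \Cref{chain-lemma} directly to the equality $(d,f_1)=(ad,f_2)$, whereas you first expand $(ad,w)=(a,w)+(d,w)$ and apply it to $(d,n/w)=(a,w)$; your worries about the projection formula and the $F_{ad}$ corestriction are unnecessary, since the hypothesis hands you the vanishing of both corestrictions of $(\pi,\rho)$ directly.
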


\begin{proof}
	We have 
	\[(N_{F_a/F}(\pi,\nu))_{F_d}=N_{F_{a,d}/F_d}(\pi,\nu)=N_{F_{a,d}/F_d}(\pi,\rho)=0 \text{ in $\Br(F_d)$,}\] and similarly $(N_{F_a/F}(\pi,\nu))_{F_{ad}}=0$ in $\Br(F_{ad})$. Thus by \Cref{kernel-base-change} there exist $f_1,f_2\in F^\times$ such that \[N_{F_a/F}(\pi,\nu)=(d,f_1)=(ad,f_2) \text{ in $\Br(F)$}.\]  
 By \Cref{chain-lemma} we have $f_1=n_an_d$, where $n_a\in N_{F_a/F}(F_a^\times)$ and $n_d\in N_{F_d/F}(F_d^\times)$.
	Since $(d,f_1)=(d,f_1n_d^{-1})$, we may replace $f_1$ by $f_1n_d^{-1}$ and hence assume that $f_1=n_a\in N_{F_a/F}(F_a^\times)$, as desired.
\end{proof}

The following proposition has already been used in the proof of \cite[Theorem 1.3]{merkurjev2022degenerate}.

\begin{prop}\label{albert}
	Let $a\in F^\times$ and $\pi,\mu\in F_a^\times$ be such that $N_{F_a/F}(\pi,\mu)=0$ in $\on{Br}(F)$. Then there exists $y\in F^\times$ such that $(\pi,\mu y)=0$ in $\on{Br}(F_a)$.
\end{prop}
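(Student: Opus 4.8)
The plan is to exploit the structure of the corestriction $N_{F_a/F}(\pi,\mu)$ by relating it to an Albert form attached to a biquaternion algebra over $F$. Write $L \coloneqq F_a$, which is either a quadratic field extension of $F$ or $L \simeq F \times F$. In the split case $L \simeq F\times F$ the statement is nearly trivial: $\pi = (\pi_1,\pi_2)$, $\mu = (\mu_1,\mu_2)$, and $N_{L/F}(\pi,\mu) = (\pi_1,\mu_1) + (\pi_2,\mu_2) = 0$ in $\Br(F)$, so $(\pi_1,\mu_1) = (\pi_2,\mu_2)$; taking $y \coloneqq 1$ already gives $(\pi,\mu) = ((\pi_1,\mu_1),(\pi_2,\mu_2)) = ((\pi_1,\mu_1),(\pi_1,\mu_1))$, which need not be zero — so instead one uses \Cref{chain-lemma} over $F$ to find a common slot. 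I would therefore focus on the case where $L/F$ is a genuine quadratic extension.

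In that case, consider the $F$-quaternion algebra $Q \coloneqq (a, N_{L/F}(\mu))$ together with the corestriction $B \coloneqq N_{L/F}(\pi,\mu)_{F}$'s underlying structure: more precisely, recall the standard fact (projection formula plus the construction of the corestriction of a quaternion algebra) that $N_{L/F}\big((\pi,\mu)\big)$ is Brauer-equivalent to a biquaternion algebra, and in fact one can write a symbol presentation using that $\mu \in L^\times$ satisfies a quadratic relation over $F(\sqrt a)$. The key input is \cite[Proposition 4.4]{merkurjev2022degenerate}, which is exactly the statement we are proving — so the intended proof must instead invoke the theory of Albert forms directly: the biquaternion algebra $N_{L/F}(\pi,\mu)$ has trivial Brauer class iff its Albert quadratic form (a $6$-dimensional form of trivial discriminant) is isotropic, hence iff that form is hyperbolic or, after scaling, Witt-equivalent to a form that visibly splits. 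I would compute the Albert form of $N_{L/F}(\pi,\mu)$ explicitly in terms of $a$, the norm form of $L$, and the reduced norm data of $\pi,\mu$, using the classical formula for the Albert form of a corestriction of a quaternion algebra from a quadratic extension (this is in Lam or in Tignol's work on corestrictions).

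The crucial move is the degree of freedom given by $y \in F^\times$: replacing $\mu$ by $\mu y$ changes $(\pi,\mu y) = (\pi, \mu) + (\pi, y)$ in $\Br(L)$, and I want to choose $y$ so that $(\pi, y)$ in $\Br(L)$ cancels $(\pi,\mu)$. Equivalently, $(\pi,\mu y) = 0$ in $\Br(L)$ iff $\mu y \in N_{L(\sqrt\pi)/L}(L(\sqrt\pi)^\times)$ by \Cref{cup-norm} (interpreting $\sqrt\pi$ appropriately, i.e. working with the quadratic étale $L$-algebra $L_\pi$). So the task is: given that the corestriction $N_{L/F}(\pi,\mu)$ vanishes, show that the coset $\mu \cdot N_{L/F}(F^\times)$ (inside $L^\times / N_{L_\pi/L}(L_\pi^\times)$) meets the norm group — i.e. that some $F$-multiple of $\mu$ becomes a norm from $L_\pi$. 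This is where the hypothesis is used: the vanishing of the corestriction over $F$ is precisely an "index reduction" / transfer condition, and the statement that it forces $\mu$ to become a norm up to an $F$-scalar is a known consequence via common slot lemmas (the "chain lemma," \Cref{chain-lemma}) applied over $L$ combined with a transfer argument.

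I expect the main obstacle to be making the passage from "corestriction is zero over $F$" to "$\mu y$ is a norm from $L_\pi/L$ for some $y\in F^\times$" fully rigorous — this is genuinely the content of \cite[Proposition 4.4]{merkurjev2022degenerate}, and the honest route is the Albert-form computation: one shows the $6$-dimensional Albert form of $N_{L/F}(\pi,\mu)$ contains a binary subform $\langle 1, -a\rangle$ (the norm form of $L/F$) scaled appropriately, hence its isotropy (forced by triviality of the Brauer class) propagates to isotropy of the $4$-dimensional "reduced norm" part of the quaternion algebra $(\pi,\mu)$ over $L$ after multiplying $\mu$ by a value represented by $\langle 1,-a\rangle$, i.e. by an element of $N_{L/F}(L^\times) \cdot F^\times$; adjusting by the $F$-factor gives $y$. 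Since the paper explicitly says this proposition is "recalled for completeness" and its proof "uses quadratic form theory, in particular, the theory of Albert forms attached to biquaternion algebras," I would present the argument in exactly that form: translate to Albert forms, use triviality of the class to get hyperbolicity, and read off the required scalar $y$ from the representability of a value by the $2$-dimensional norm form of $F_a/F$.
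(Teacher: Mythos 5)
Your overall strategy --- identify $N_{F_a/F}(\pi,\mu)$ with a biquaternion algebra, pass to its Albert form, invoke Albert's theorem to get hyperbolicity from the vanishing of the Brauer class, and extract $y$ from a representability statement --- is exactly the strategy of the paper. But the decisive step, which you yourself flag as ``the main obstacle,'' is missing, and the mechanism you sketch for it does not work. The paper's argument runs as follows: choose a nonzero $F$-linear map $s\colon F_a\to F$ with $s(1)=0$, and form the transfer $s_*(q)$ of the pure-quaternion form $q=\langle\pi,\mu,-\pi\mu\rangle$ of $(\pi,\mu)$ over $F_a$; this $6$-dimensional form over $F$ is similar to an Albert form of $N_{F_a/F}(\pi,\mu)$, hence hyperbolic. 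The key point is that the $4$-dimensional subform $s_*\langle\mu,-\pi\mu\rangle$ of a $6$-dimensional hyperbolic form is automatically isotropic, and, because $\ker(s)=F\cdot 1$, an isotropic vector for $s_*\langle\mu,-\pi\mu\rangle$ is precisely a nonzero vector of the underlying two-dimensional $F_a$-space on which $\langle\mu,-\pi\mu\rangle=\mu\langle 1,-\pi\rangle$ takes a value lying in $F$. If that value is $0$, then $\pi\in F_a^{\times2}$ and $y=1$ works; otherwise that value is the desired $y$: then $\langle 1,-\pi\rangle$ represents $\mu y$ modulo squares, so $(\pi,\mu y)=0$ by \Cref{cup-norm}.

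Your sketch replaces this with the observation that the Albert form decomposes into scaled copies of the norm form $\langle 1,-a\rangle$, together with the claim that its isotropy ``propagates to the reduced norm part after multiplying $\mu$ by a value represented by $\langle 1,-a\rangle$, i.e.\ by an element of $N_{F_a/F}(F_a^\times)\cdot F^\times$.'' This does not produce $y$: the conclusion that $y$ lies in $N_{F_a/F}(F_a^\times)\cdot F^\times$ carries no information (that set is all of $F^\times$), and nowhere do you argue why the specific element $\mu y$ becomes a norm from $(F_a)_\pi$ over $F_a$, which is what \Cref{cup-norm} requires. The ingredient you are missing is the normalization $s(1)=0$: it is exactly what converts isotropy of the transferred $4$-dimensional form over $F$ into the statement that a binary form over $F_a$ represents an element of $F^\times$, and that element \emph{is} $y$. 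Two smaller remarks: the split case $F_a\simeq F\times F$ needs no separate treatment, since the transfer argument applies verbatim to the \'etale algebra (and your chain-lemma fix for it, while salvageable, is left unfinished); and in the field case the appeal to \Cref{chain-lemma} ``combined with a transfer argument'' is a placeholder, not a proof.
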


\begin{proof}
	See \cite[Proposition 4.4]{merkurjev2022degenerate}. For completeness, we recall the proof. We use the theory of Albert forms attached to biquaternion algebras; see \cite[\S 16 A]{knus1998book}. 
 
	Let $s\colon F_a\to F$ be a nonzero linear map such that $s(1)=0$, let $Q$ be the quaternion algebra $(\pi,\mu)$ and let $Q^0\subset Q$ be the subspace of pure quaternions. Let $q\colon Q^0\to F_a$ be the quadratic form given by squaring: we have $q=\ang{\pi,\mu,-\pi\mu}$. Let $s_*(q)$ be the transfer of $q$; see \cite[Chapter VII, \S 1]{lam2005introduction}. Then it follows from \cite[Propositions (16.23) and (16.27)]{knus1998book} that $s_*(q)$ is similar to an Albert form over $F$ of the biquaternion $F$-algebra given by the corestriction $N_{F_a/F}(\pi,\mu)$; see the proof of \cite[Corollary (16.28)]{knus1998book}. Thus, by Albert's theorem \cite[Theorem 16.5]{knus1998book}, the fact that $N_{F_a/F}(\pi,\mu)$ is split implies that $s_*(q)$ is hyperbolic.
   
    Since $s_*(q)$ is $6$-dimensional and $4>6/2$, the $4$-dimensional subform $s_*\ang{\mu,-\pi\mu}$ of $s_*(q)$ is isotropic. We deduce that the form $\ang{\mu,-\pi\mu}$ over $F_a$ represents an element of $F$. If the form $\ang{\mu,-\pi\mu}$ is isotropic, then $\pi\in F_a^{\times 2}$, hence $(\pi,\mu)=0$ in $\on{Br}(F_a)$ and we may take $y=1$. Otherwise $\ang{\mu,-\pi\mu}$ over $F_a$ represents an element $y\in F^\times$, then $\mu y$ is represented by $\ang{1, -\pi}$. By \Cref{cup-norm}, this implies that $(\pi,\mu y)=0$ in $\on{Br}(F_a)$ and completes the proof.
\end{proof}

\subsection{Specialization}\label{specialize}
Recall from \cite[Remarks 1.11 and 2.5]{rost1996chow} that the Galois cohomology functor $H^*(-,\Z/2\Z)$ from the category of field extensions of $F$ is a cycle module, that is, it satisfies the axioms of \cite[Definitions 1.1 and 2.1]{rost1996chow}.

For all integers $n\geq 1$, all regular local $F$-algebras $R$ of dimension $n$ and all ordered systems of parameters $\pi\coloneqq (\pi_1,\dots,\pi_n)$ in $R$, letting $K$ and $K_0\coloneqq R/(\pi_1,\dots,\pi_n)$ be the fraction field and residue field of $R$, respectively, we have a specialization map
\[s_{\pi}\colon H^*(K,\Z/2\Z)\to H^*(K_0,\Z/2\Z),\]
which is a graded ring homomorphism defined as follows. 

Suppose first that $n=1$, that is, $R$ is a discrete valuation ring and $\pi=(\pi_1)$. Then we set $s_{\pi}\coloneqq \partial_{\pi_1}((-\pi_1)\cup (-))$, where $\partial_{\pi_1}\colon H^{*+1}(K,\Z/2\Z)\to H^*(K_0,\Z/2\Z)$ is the residue map at $\pi_1$; see \cite[Definition 1.1, below D4]{rost1996chow}.

Suppose now that $n\geq 2$ and that the specialization map has been defined for all regular local $F$-algebras of dimension $< n$ and all ordered systems of parameters on such algebras. For $i=2,\dots,n$ let $\cl{\pi}_i\in R/(\pi_1)$ be the reduction of $\pi_i$ modulo $\pi_1$ and set $\cl{\pi}\coloneqq (\cl{\pi}_2,\dots,\cl{\pi}_n)$: it is an ordered system of parameters in the regular local ring $R/(\pi_1)$. Then $s_{\pi}$ is defined by $s_{\pi}\coloneqq s_{\cl{\pi}}\circ  s_{(\pi_1)}$, where $\pi_1$ is viewed as an element of the localization $R_{(\pi_1)}$. 

The ring homomorphism $s_{\pi}$ depends on the choice of the ordered set $\pi$. Using the isomorphism $H^2(F,\Z/2\Z)\simeq \Br(F)[2]$ coming from Kummer Theory, we obtain a specialization map 
\[s_{\pi}\colon \Br(K)[2]\to \Br(K_0)[2].\]

Let $X$ be an $F$-variety and $P\in X$ be a regular $F$-point. For all ordered systems of parameters $\pi=(\pi_1,\dots,\pi_n)$ in the regular local ring $R=O_{X,P}$ the previous discussion yields specialization maps
\begin{equation*}
s_{P,\pi}\colon H^*(F(X),\Z/2\Z)\to H^*(F,\Z/2\Z),\quad s_{P,\pi}\colon \Br(F(X))[2]\to \Br(F)[2].
\end{equation*}
If $f\in O_{X,P}^\times$ (that is, $f$ is regular and nonzero at $P$) then it follows from the definition that $s_{P,\pi}(f)=(f(P))$. In particular, if $f\in F^\times$ is constant then $s_{P,\pi}(f)=(f)$.

\begin{lemma}\label{specialize-corestriction}
    Let $n\geq1$ be an integer, $X$ be an $n$-dimensional $F$-variety, $P\in X$ be a regular $F$-point, and $\pi\coloneqq (\pi_1,\dots,\pi_n)$ be an ordered system of parameters in $O_{X,P}$. Let $F'$ be a finite separable field extension of $F$, let $X'\coloneqq X\times_FF'$, let $P'$ be the only $F'$-point of $X'$ lying over $P$, and consider the system of parameters $\pi'\coloneqq (\pi_1\otimes 1,\dots,\pi_n\otimes 1)$ in the regular local ring $O_{X',P'}=O_{X,P}\otimes_FF'$. Then the following squares commute:
    \begin{equation*}
    \adjustbox{max width=\textwidth}{
    \begin{tikzcd}
    H^*(F(X),\Z/2\Z) \arrow[d,"(-)_{F'(X')}"] \arrow[r, "s_{P,\pi}"] & H^*(F,\Z/2\Z)\arrow[d,"(-)_{F'}"]  \\
    H^*(F'(X'),\Z/2\Z) \arrow[r,"s_{P',\pi'}"] & H^*(F',\Z/2\Z) 
    \end{tikzcd}
    \quad
    \begin{tikzcd}
    H^*(F'(X'),\Z/2\Z) \arrow[d,"N_{F'(X')/F(X)}"] \arrow[r,"s_{P',\pi'}"] & H^*(F',\Z/2\Z)\arrow[d,"N_{F'/F}"] \\
    H^*(F(X),\Z/2\Z)  \arrow[r,"s_{P,\pi}"] & H^*(F,\Z/2\Z). 
    \end{tikzcd}
    }
    \end{equation*}
\end{lemma}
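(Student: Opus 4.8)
The plan is to reduce to the case $n=1$ by unwinding the inductive definition of $s_{P,\pi}$, and then to deduce that case from the standard compatibilities of the residue maps of a cycle module with restriction and corestriction.

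\emph{Reduction to $n=1$.} Since $F'/F$ is finite separable, $\Spec(F')\to\Spec(F)$ is finite \'etale, hence so is its base change $\Spec(A\otimes_F F')\to\Spec(A)$ for any $F$-algebra $A$. Apply this to $A_i\coloneqq O_{X,P}/(\pi_1,\dots,\pi_i)$ for $0\le i\le n$: each $A_i$ is a regular local domain (the ideal $(\pi_1,\dots,\pi_i)$ is an initial segment of a regular system of parameters of $O_{X,P}$) with residue field $F(P)=F$, so $A_i\otimes_F F'$ is regular, is finite over $A_i$, and has closed fibre $\Spec(F\otimes_F F')=\Spec(F')$ over the maximal ideal of $A_i$, which is a single point; hence $A_i\otimes_F F'$ is a regular local domain with residue field $F'$. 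Using that $A_i\otimes_F F'$ is finite over $A_i$ one checks that $\on{Frac}(A_i\otimes_F F')=\on{Frac}(A_i)\otimes_F F'$; in particular, writing $K_i\coloneqq\on{Frac}(A_i)$, the ring $K_i'\coloneqq K_i\otimes_F F'$ is a field. Now let $\tau_i$ denote the image of $\pi_i$ in $A_{i-1}$ and put $R_i\coloneqq(A_{i-1})_{(\tau_i)}$; then $R_i$ is a discrete valuation ring with uniformizer $\tau_i$, fraction field $K_{i-1}$ and residue field $K_i$, while $R_i\otimes_F F'=(A_{i-1}\otimes_F F')_{(\tau_i\otimes 1)}$ is a discrete valuation ring, finite \'etale and unramified over $R_i$, with uniformizer $\tau_i\otimes 1$, fraction field $K_{i-1}'$ and residue field $K_i'$. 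By the inductive definition of the specialization map, $s_{P,\pi}$ is the composite $s_{(\tau_n)}\circ\cdots\circ s_{(\tau_1)}$ of the one-dimensional specialization maps $s_{(\tau_i)}\colon H^*(K_{i-1},\Z/2\Z)\to H^*(K_i,\Z/2\Z)$, and likewise $s_{P',\pi'}=s_{(\tau_n\otimes 1)}\circ\cdots\circ s_{(\tau_1\otimes 1)}$ with $s_{(\tau_i\otimes 1)}\colon H^*(K_{i-1}',\Z/2\Z)\to H^*(K_i',\Z/2\Z)$ (here $P'$ is indeed the unique $F'$-point of $X'$ above $P$ and is regular, as $O_{X',P'}=O_{X,P}\otimes_F F'$ is a regular local ring with residue field $F'$). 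Stacking the corresponding squares for $i=1,\dots,n$, we reduce the lemma to the case $n=1$: for a discrete valuation ring $R$ over $F$ with uniformizer $t$, fraction field $K$ and residue field $k$, and the unramified base change $R'=R\otimes_F F'$, which is again a discrete valuation ring with uniformizer $t\otimes 1$, fraction field $K\otimes_F F'$ and residue field $k\otimes_F F'$, one must show that $s_{(t)}$ and $s_{(t\otimes 1)}$ fit into commutative squares with restriction and with corestriction along $F'/F$.

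\emph{The case $n=1$.} By definition $s_{(t)}=\partial_t\circ\big((-t)\cup(-)\big)$ and $s_{(t\otimes 1)}=\partial_{t\otimes 1}\circ\big((-(t\otimes 1))\cup(-)\big)$, where $\partial$ denotes the residue map. For the restriction square, given $x\in H^*(K,\Z/2\Z)$, note that $-(t\otimes 1)\in H^1(K\otimes_F F',\Z/2\Z)$ is the restriction of $(-t)$ and that cup products commute with restriction, so $(-(t\otimes 1))\cup x_{K\otimes_F F'}=\big((-t)\cup x\big)_{K\otimes_F F'}$; then apply the compatibility of $\partial$ with restriction to an unramified extension, $\partial_{t\otimes 1}\circ(-)_{K\otimes_F F'}=(-)_{k\otimes_F F'}\circ\partial_t$. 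For the corestriction square, given $z\in H^*(K\otimes_F F',\Z/2\Z)$, use the projection formula $(-t)\cup N_{K\otimes_F F'/K}(z)=N_{K\otimes_F F'/K}\big((-(t\otimes 1))\cup z\big)$, followed by the compatibility of $\partial$ with corestriction in the unramified case, $\partial_t\circ N_{K\otimes_F F'/K}=N_{k\otimes_F F'/k}\circ\partial_{t\otimes 1}$. These two compatibilities of $\partial$, the projection formula, and the compatibility of cup products with restriction are all among the defining rules for the residue maps of a cycle module; see \cite{rost1996chow}. (Alternatively, passing to the henselization of $R$ one has $H^*(K,\Z/2\Z)\cong H^*(k,\Z/2\Z)\oplus\big((t)\cup H^*(k,\Z/2\Z)\big)$, and both compatibilities become immediate.)

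I expect the only genuine work to be the commutative-algebra bookkeeping in the first step — verifying that base change along the finite separable extension $F'/F$ turns the regular local domains $A_i$ and the discrete valuation rings $R_i$ into objects of the same kind, with the expected uniformizers, fraction fields and residue fields — together with locating the precise residue-compatibility rules in \cite{rost1996chow}; the cohomological content of the $n=1$ case is entirely formal.
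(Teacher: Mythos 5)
Your proposal is correct and follows essentially the same route as the paper: both unwind the inductive definition of $s_{P,\pi}$ into a chain of one-dimensional specialization maps attached to unramified extensions of discrete valuation rings, and then invoke the cycle-module compatibilities of the residue map with restriction, corestriction and the projection formula (Rost's rules R3a, R2c, R3b). The only cosmetic difference is that the paper phrases the reduction as an induction on $n$, peeling off $\pi_1$ and applying the inductive hypothesis to $Y=\overline{\{(\pi_1)\}}$, whereas you stack all $n$ squares at once.
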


\Cref{specialize-corestriction} admits an obvious generalization to the case when $F'$ is an \'etale $F$-algebra.

\begin{proof}
    We prove the result by induction on $n\geq 1$. When $n=1$, the inclusion $O_{X,P}\subset O_{X',P'}$ is an unramified extension of discrete valuation rings. Since $H^*(-,\Z/2\Z)$ is a cycle module, the commutativity of the left square then follows from  \cite[Definition 1.1, R3a]{rost1996chow}, and that of the right square from \cite[Definition 1.1, R2c and R3b]{rost1996chow}.
    
    Suppose now that $n\geq 2$. Let $Q\in X$ be the point corresponding to the prime ideal  $(\pi_1)\subset O_{X,P}$ and $Y\subset X$ be the closure of $Q$. Similarly, let $Q'\in X'$ be the point corresponding to the prime ideal  $(\pi_1\otimes 1)\subset O_{X',P'}$ and $Y'\subset X'$ be the closure of $Q'$. For $i=2,\dots,n$ let $\cl{\pi}_i\in O_{Y,P}$ be the reduction of $\pi_i$ modulo $\pi_1$, and consider the systems of parameters $\cl{\pi}\coloneqq (\cl{\pi}_2,\dots,\cl{\pi}_n)$ in $O_{Y,P}$ and $\cl{\pi}'\coloneqq (\cl{\pi}_2\otimes 1,\dots,\cl{\pi}_n\otimes 1)$ in $O_{Y',P'}=O_{Y,P}\times_FF'$. We obtain the following commutative diagram:  
    \begin{equation}\label{rectangle}
    \begin{tikzcd}
    H^*(F'(X'),\Z/2\Z) \arrow[r,"s_{(\pi_1')}"]\arrow[d,"N_{F'(X')/F(X)}"] & H^*(F'(Y'),\Z/2\Z) \arrow[d,"N_{F'(Y')/F(Y)}"] \arrow[r,"s_{P',\cl{\pi}'}"] & H^*(F',\Z/2\Z) \arrow[d,"N_{F'/F}"] \\
    H^*(F(X),\Z/2\Z)  \arrow[r, "s_{(\pi_1)}"] & H^*(F(Y),\Z/2\Z)\arrow[r,"s_{P,\cl{\pi}}"] & H^*(F,\Z/2\Z). 
    \end{tikzcd}
    \end{equation}
    Indeed, since $\dim(Y)=n-1$, the right square in (\ref{rectangle}) commutes by the inductive assumption. Moreover, $O_{X,Q}$ is a discrete valuation ring and $O_{X',Q'}=O_{X,Q}\otimes_FF'$, hence the extension $O_{X,Q}\subset O_{X',Q'}$ is an unramified extension of discrete valuation rings. The commutativity of the left square in (\ref{rectangle}) thus follows from \cite[Definition 1.1, R2c and R3b]{rost1996chow}. By definition, the composition of the top (resp. bottom) horizontal arrows in (\ref{rectangle}) is equal to $s_{(P',\pi')}$ (resp. $s_{(P,\pi)}$), therefore the second square in the statement of \Cref{specialize-corestriction} commutes. The proof of the commutativity of the first square in the statement of \Cref{specialize-corestriction} is entirely analogous, using  \cite[Definition 1.1, R3a]{rost1996chow} instead of \cite[Definition 1.1, R2c and R3b]{rost1996chow}.
\end{proof}

\section{The key proposition}\label{key-section}
The purpose of this section is the proof of the following proposition.

\begin{prop}\label{x-nu}
	Let $F$ be a field of characteristic different from $2$. Let $a,c,d\in F^\times$, let $\alpha\in F_a^\times$, and let $\delta\in F_d^\times$ be such that and $N_{F_d/F}(\delta)=c$. Suppose that $c$ is not a square in $F$ and that $(\alpha,\delta)\in \on{Br}(F_{a,d})$ is in the image of the pullback map $\on{Br}(F)[2]\to \on{Br}(F_{a,d})[2]$. Then there exist $x\in F^\times$ and $\nu \in F_a^\times$ such that:
	\begin{enumerate}
		\item $(\alpha x,\delta)=(\alpha x,\nu)$ in $\Br(F_{a,d})$, and
		\item $N_{F_a/F}(\alpha x,\nu)=0$ in $\Br(F)$.
	\end{enumerate}
\end{prop}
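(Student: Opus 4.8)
The plan is to reduce \Cref{x-nu} to a concrete question about Brauer classes and then settle that question geometrically: exhibit $x$ and $\nu$ over the function field of a suitable auxiliary $F$-variety, and descend to $F$ using the specialization maps of \Cref{specialize}.

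I would begin with reductions: if $(\alpha,\delta)=0$ in $\Br(F_{a,d})$ we may take $x=\nu=1$, since $N_{F_a/F}(\alpha,1)=0$; and the cases in which one of $F_a,F_d,F_{ad}$ fails to be a field can be treated separately, so assume $F_{a,d}$ is a biquadratic field extension of $F$. Next I would extract the constraints hidden in the hypothesis. Fix $A\in\Br(F)[2]$ with $A_{F_{a,d}}=(\alpha,\delta)$. Corestriction along a quadratic extension kills the image of restriction, so applying $N_{F_{a,d}/F_a}$, $N_{F_{a,d}/F_d}$ and $N_{F_{a,d}/F_{ad}}$ to $A_{F_{a,d}}=(\alpha,\delta)$, together with the projection formula and the identities $N_{F_{a,d}/F_a}(\delta)=N_{F_{a,d}/F_{ad}}(\delta)=c$, yields
\[(\alpha,c)=0 \text{ in }\Br(F_a),\qquad (N_{F_a/F}(\alpha),\delta)=0\text{ in }\Br(F_d),\]
together with a vanishing over $F_{ad}$. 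These are the inputs I would feed into the later steps; in particular the first identity gives $(\alpha x,c)=(x,c)$ in $\Br(F_{a,d})$ for every $x\in F^\times$, and applying the Galois automorphism of $F_{a,d}/F_a$ to condition (1) --- it fixes $\alpha x$ and $\nu$ and sends $\delta$ to $\bar\delta$, with $\delta/\bar\delta\equiv c$ modulo squares --- shows that $(x,c)=0$ in $\Br(F_{a,d})$ is a \emph{necessary} condition on $x$. I do not expect it to be sufficient, and making up the difference is the hard part.

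The core of the argument is geometric. I would construct an auxiliary $F$-variety $X$ --- rational, and built so as to encode the relevant norm data attached to $a,c,d,\alpha,\delta,A$ (for instance a Weil restriction of a conic, or an explicit product of conics and quadrics) --- with the property that over the larger field $F(X)$ one can exhibit $\tilde x\in F(X)^\times$ and $\tilde\nu\in(F_a\otimes_F F(X))^\times$ satisfying the analogues of (1) and (2) over $F(X)$. The point of passing from $F$ to $F(X)$ is that the constraints extracted above, which over $F$ merely assert the abstract vanishing of certain Brauer classes, become \emph{effective} over $F(X)$ --- via Hilbert 90 and the abundance of rational points on $X$ --- so that the auxiliary elements witnessing (1) and (2) genuinely exist there. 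Pinning down the correct $X$ and the correct explicit functions is the crux of the proof; I would expect the precise shape of \Cref{x-nu}, and in particular the decision to introduce $\nu$ rather than trying to kill $(\alpha x,\delta)$ outright, to be forced by what survives this geometric step.

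Finally I would specialize. Choose a regular $F$-point $P\in X$ and an ordered system of parameters $\pi$ in $O_{X,P}$. The delicate point, flagged already in the Introduction, is that $\tilde x$, $\tilde\nu$, and the auxiliary functions used to verify (1)--(2) over $F(X)$ need not all be regular and nonzero at $P$; the way around this is to check, by a sequence of residue computations along the divisors through $P$, that the Brauer classes that actually occur in (1) and (2) are unramified at $P$, so that the specialization homomorphisms $s_{P,\pi}$ of \Cref{specialize} are defined on them. Since $s_{P,\pi}$ is a graded ring homomorphism that commutes with base change and with corestriction for \'etale algebras (\Cref{specialize-corestriction}), applying it to the identities valid over $F(X)$ yields identities (1) and (2) over $F$, with $x\in F^\times$ and $\nu\in F_a^\times$ the specialized values; and because the specialization of a symbol is again a symbol, the resulting classes genuinely have the required shapes $(\alpha x,\delta)$, $(\alpha x,\nu)$ and $N_{F_a/F}(\alpha x,\nu)$. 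I expect the main obstacle to be the construction of $X$ and $P$ and the attendant residue bookkeeping; the reductions, the corestriction identities, and the formal properties of the specialization maps should be comparatively routine given \Cref{specialize}, \Cref{albert} and \Cref{theta-lemma}.
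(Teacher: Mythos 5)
Your outline correctly anticipates the overall architecture of the paper's argument --- extract the corestriction constraints from the hypothesis (in particular $(\alpha,c)=0$ in $\Br(F_a)$, which the paper also uses as its starting point), work over the function field of an auxiliary rational variety, and descend via the specialization maps of \Cref{specialize} and \Cref{specialize-corestriction}. Your observation that $(x,c)=0$ in $\Br(F_{a,d})$ is a necessary condition on $x$ is a good one: it is precisely what motivates the paper's decisive choice of $x$ as a value of the norm form $f=x_1^2-cx_2^2$ from $F_c$. But the proposal stops exactly where the proof begins. The content of \Cref{x-nu} is the explicit construction: $X=\A^2_F$, $f=x_1^2-cx_2^2$, the factorization $\alpha f=h_1^2-ch_2^2$ coming from multiplicativity of the norm form, the element $\rho$ with $N_{(K_a)_{\alpha f}/K_a}(\rho)=c$, and the element $g=\on{Tr}(\delta)+\on{Tr}(\rho)=2h/h_2$ produced by \Cref{comes-from-ac}, which is what makes condition (1) hold generically. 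None of this is derivable from the constraints you list; you acknowledge that ``pinning down the correct $X$ and the correct explicit functions is the crux,'' which is an accurate self-assessment that the crux is missing.

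There is also a structural flaw in the plan as stated. You propose to exhibit $\tilde x,\tilde\nu$ satisfying \emph{both} (1) and (2) over $F(X)$ and then specialize at an arbitrary regular $F$-point. The paper does not, and plausibly cannot, achieve (2) generically: over $K=F(X)$ it only proves the weaker identity $N_{K_a/K}(\alpha f,g)=(d,N_{K_a/K}(h\eta))$ for a constant $\eta\in F_a^\times$, obtained by combining \Cref{theta-lemma} with a residue computation showing that $(\alpha f,g)+(d,h)$ has constant corestriction (homotopy invariance of $\Br(-)[2]$). The right-hand side is in general a nonzero class over $K$. The vanishing in (2) is achieved only at the specialization step, by choosing the point $P$ so that $h(P')=\eta$ --- possible by the linear-independence statement of \Cref{2by2-matrix} --- whence the specialized class is $(d,N_{F_a/F}(\eta)^2)=0$. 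So the choice of specialization point is not a routine matter of avoiding bad loci, as your last paragraph suggests; it is where condition (2) is actually created. Without this mechanism (or the explicit functions that make it available), the proposal does not constitute a proof.
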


\begin{proof}
The proof will require several intermediate steps. To begin with, let $u_1,u_2\in F$ be such that $\delta=u_1+u_2\sqrt{d}$. We have
\begin{equation}\label{ff0}
	u_1^2-du_2^2=N_{F_d/F}(\delta)=c.
\end{equation}
Let $A\in \on{Br}(F)[2]$ be such that $A_{F_{a,d}}=(\alpha,\delta)$ in $\on{Br}(F_{a,d})$. Then
\[(\alpha,c)=N_{F_{a,d}/F_a}(\alpha,\delta)=2A_{F_a}=0\text{ in $\Br(F_{a})$}.\] It follows from \Cref{cup-norm} that there exist $\alpha_1,\alpha_2\in F_a$ such that
\begin{equation}\label{alpha-alpha1-alpha2}
\alpha=\alpha_1^2-c \alpha_2^2.
\end{equation}

\begin{lemma}\label{indep}
In order to prove \Cref{x-nu}, we may assume that $\alpha_1$ and $\alpha_2$ are linearly independent over $F$.
\end{lemma}

\begin{proof}
    Suppose that $\alpha_1$ and $\alpha_2$ are linearly dependent over $F$, so that there exists $t\in F$ such that either $\alpha_1=t\alpha_2$ or $\alpha_2=t\alpha_1$. By (\ref{alpha-alpha1-alpha2}), in the first case $\alpha=(t^2-c)\alpha_2^2$, and in the second case $\alpha=(1-ct^2)\alpha_1^2$. Thus, there exist $i\in\set{1,2}$ and $u\in F^\times$ such that $\alpha= u\alpha_i^2$. Note that $u\in F^\times$ and $\alpha_i\in F_a^\times$ because $\alpha\in F^\times_a$. Letting $x=u$ and $\nu=1$, we have $(\alpha x,\delta)=(u^2,\delta)=0$ in $\Br(F_{a,d})$ and $(\alpha x, \nu)=(ux, \nu)=0$ in $\Br(F_a)$, hence (1) and (2) of \Cref{x-nu} are satisfied.
    \end{proof}

In view of \Cref{indep}, from now on we assume that $\alpha_1$ and $\alpha_2$ are linearly independent over $F$. 

\begin{lemma}\label{2by2-matrix}
    The elements $\alpha_1+u_1\alpha_2$ and $u_1\alpha_1 + c \alpha_2$ in $F_a$ are linearly independent over $F$.
\end{lemma}

\begin{proof}
Since $\alpha_1$ and $\alpha_2$ are linearly independent over $F$, it is enough to show that the matrix
\[
\begin{bmatrix}
	1 & u_1  \\
	u_1 & c
\end{bmatrix}
\]
is invertible, i.e., that $c\neq u_1^2$. This is true because $c$ is not a square in $F$.
\end{proof}

Consider the field $K\coloneqq F(x_1,x_2)$, where $x_1$ and $x_2$ are algebraically independent over $F$. Set 
\[
f\coloneqq x_1^2-c x_2^2\in K^\times
\]
and
\[h_1\coloneqq \alpha_1x_1+c\alpha_2x_2\in K_a^\times,\qquad h_2=\alpha_1x_2+\alpha_2x_1\in K_a^\times.\]
The fact that $h_1,h_2\in K_a^\times$ follows from the linear independence of $\alpha_1$ and $\alpha_2$ over $F$.
We have
\begin{equation}\label{ff1}
	\alpha f=(\alpha_1^2-c \alpha_2^2)(x_1^2-c x_2^2)=(\alpha_1 x_1+c \alpha_2 x_2)^2 -c(\alpha_1 x_2+ \alpha_2 x_1)^2=h_1^2-ch_2^2,
\end{equation}
hence
\begin{equation}\label{ff2}
	c=\left(\frac{h_1}{h_2}\right)^2-\alpha f\left(\frac{1}{h_2}\right)^2=N_{(K_a)_{\alpha f}/K_a}(\rho),
\end{equation}
where 
\[\rho\coloneqq \frac{h_1}{h_2}+\frac{\sqrt{\alpha f}}{h_2}\in (K_a)_{\alpha f}^\times.\]
We have $\rho\in (K_a)_{\alpha f}^\times$ because $c\in F^\times$.  Define
\[h\coloneqq h_1+u_1h_2\in K_a^\times.\]
The fact that $h\in K_a^\times$ follows from \Cref{2by2-matrix}. Finally, set
\[g\coloneqq \on{Tr}_{K_{a,d}/K_a}(\delta)+\on{Tr}_{(K_a)_{\alpha f}/K_a}(\rho)
	= 2u_1+2\frac{h_1}{h_2}
	= \frac{2h}{h_2}\in K_a^\times.\]

\begin{lemma}\label{cores-trivial} 
    (a) We have $(\alpha f,\delta)=(\alpha f,g)$ in $\Br(K_{a,d})$.

    (b) There exists $h'\in K_a^\times$ such that 
\[N_{K_a/K}(\alpha f,g)=(d,N_{K_a/K}(h')) \text{ in $\Br(K)$.}\]
\end{lemma}
\begin{proof}
    (a) Since $N_{F_d/F}(\delta)=c$, we have $N_{K_{a,d}/K_a}(\delta)=c$. On the other hand, by (\ref{ff2}) we know that $N_{(K_a)_{\alpha f}/K_a}(\rho)=c$. The conclusion now follows from \Cref{comes-from-ac}, applied to the base field $K_a$ and the elements $\rho\in (K_a)_{\alpha f}^\times$ and $\delta \in K_{a,d}^\times$.

    (b) By (a) and \Cref{theta-lemma}, it is enough to show that the corestrictions of $(\alpha f,\delta)\in \Br(K_{a,d})$ to $K_d$ and $K_{ad}$ are zero. We have $(\alpha f,\delta)=(\alpha,\delta)+(f,\delta)$ in $\Br(K_{a,d})$, and so it suffices to show that the corestrictions of $(\alpha,\delta)$ and $(f,\delta)$ from $K_{a,d}$ to $K_d$ and $K_{ad}$ are trivial. 

Recall that $(\alpha,\delta)=A_{F_{a,d}}$ in $\Br(F_{a,d})$ for some $A\in \Br(F)[2]$. Thus 
\[N_{F_{a,d}/F_d}(\alpha,\delta)=N_{F_{a,d}/F_d}(A_{F_{a,d}})=2A_{F_d}=0 \text{ in $\Br(F_d)$.}\]
Similarly, $N_{F_{a,d}/F_d}(\alpha,\delta)=0$ in $\Br(F_{ad})$. Since corestrictions commute with base change, it follows that the corestrictions of $(\alpha,\delta)\in \Br(K_{a,d})$ to $K_d$ and $K_{ad}$ are zero. 

Since $\delta \in F_d^\times$, we have $N_{K_{a,d}/K_d}(f,\delta)=(f,\delta^2)=0$ in $\Br(K_d)$. Moreover, by definition $f\in N_{K_c/K}(K_c^\times)$, hence by \Cref{cup-norm} we have $N_{K_{a,d}/K_{ad}}(f,\delta)=(f,c)=0$ in $\Br(K_{ad})$. This shows that the corestrictions of $(f,\delta)\in \Br(K_{a,d})$ to $K_d$ and $K_{ad}$ are zero and completes the proof.
\end{proof}

We view $K$ as the function field of the affine plane $\A^2_F$, with coordinates $x_1$ and $x_2$. We consider the following divisors of $\A^2_{F_a}$:

\begin{enumerate}
	\item[(i)] the divisor $D_1\subset \A^2_{F_a}$ given by $f=0$. We have $D_1=D'_1\times_FF_a$, where $D'_1\subset \A^2_F$ is given by $f=0$; 
	\item[(ii)] the divisor $D_2\subset \A^2_{F_a}$ given by $h_2=0$; 
	\item[(iii)] the divisor $D_3\subset \A^2_{F_a}$ given by $h=0$. 
\end{enumerate}
The divisors $D_2$ and $D_3$ are irreducible. Since $c$ is not a square in $F$, the divisor $D_1'$ is irreducible. If $c$ is not a square in $F_a$, the divisor $D_1$ is also irreducible. If $c=c_1^2$ for some $c_1\in F_a^\times$, then $D_1=D_1^+\cup D_1^-$, where $D_1^+$ is given by the equation $x_1=c_1x_2$ and $D_1^-$ is given by the equation $x_1=-c_1x_2$.

For $i=1,2,3$, we denote by $F_a(D_i)$ the product of the residue fields of the generic points of $D_i$. We also write $F(D_1')$ for the residue field of $D_1'$. The \'etale $F(D_1')$-algebra $F_a(D_1)$ has degree $2$, and is split if and only if $c$ is a square in $F_a$. 

\begin{lemma}\label{unramified}
    (a) The Brauer class \[B\coloneqq (\alpha f, g)+(d,h)\in \Br(K_a)\] is unramified away from $D_1$, and the residue of $B$ at $D_1$ is equal to $2u_1+ 2\frac{x_1}{x_2}\in F_a(D_1)^\times /F_a(D_1)^{\times 2}$.

    (b) There exists $C$ in the image of $\Br(F)\to \Br (K)$ such that    
    \begin{equation*}
	N_{K_a/K}(\alpha f,g)=(d,N_{K_a/K}(h))+C \text{ in $\Br(K)$.}
\end{equation*}
\end{lemma}

\begin{proof}
(a) We will prove (a) by computing the residues of $(\alpha f, g)$ and $(d,h)$ at all divisors of $\A^2_{F_a}$. It is clear that $(d,h)$ is unramified away from $D_3$, and that its residue at $D_3$ is equal to $d$.

\begin{enumerate}
	\item[(i)] The rational function $\frac{x_1}{x_2}$ on $\A^2_F$ is regular and non-vanishing at the generic point of $D'_1$. We also denote by $\frac{x_1}{x_2}\in F(D_1')^\times$ the reduction of $\frac{x_1}{x_2}$. Since $f=0$ in $F(D_1')$, we have  $(\frac{x_1}{x_2})^2=c$ in $F(D_1')$. Moreover, the residue of $(\alpha f,g)$ at $D_1$ is equal to
	\[
	2u_1+2\frac{\alpha_1 \frac{x_1}{x_2}+c \alpha_2}{\alpha_1+ \alpha_2\frac{x_1}{x_2}}=2u_1+2\frac{x_1}{x_2}.\]
	\item[(ii)] It follows from (\ref{ff1}) that $\alpha f=h_1^2$ is a square
	in $F_a(D_2)$, and hence $(\alpha f,g)$ is unramified at $D_2$. Since $(d,h)$ is also unramified at $D_1$, we conclude that $B$ is unramified at $D_2$.
	\item[(iii)] In $F_a(D_3)$ we have $h=0$, hence $u_1h_2=-h_1$. It now follows from (\ref{ff0}) and (\ref{ff1}) that
	\[
	\alpha f = h_1^2-ch_2^2 = u_1^2h_2^2 -ch_2^2=(u_1^2-c)h_2^2=du_2^2h_2^2,
    \]
	that is, $\alpha f=d$ in $F_a(D_3)^\times / F_a(D_3)^{\times 2}$. Thus, the residue of $(\alpha f,g)$ at $D_3$ is equal to $d$. The residue of $(d,h)$ at $D_3$ is also equal to $d$, therefore $B$ is unramified at $D_3$.
	\item[(iv)] If $D\subset \A^2_{F_a}$ is a prime divisor different from the $D_i$ (and also $D^+$ and $D^-$ if $c$ is a square in $F_a$) and $D'\subset \A^2_F$ is the image of $D$, then $(\alpha f,g)$ is unramified at $D$, hence $N_{K_a/K}(\alpha f, g)$ is unramified at $D'$.
\end{enumerate}
This proves (a). 

(b) By the compatibility of the residue maps with corestrictions, $N_{K_a/K}(B)$ is unramified away from $D'_1$. Moreover, since $D_1=D'_1\times_FF_a$, the residue of $N_{K_a/K}(\alpha f,g)$ at $D'_1$ is equal to $N_{F_a(D_1)/F(D_1')}(2u_1+2\frac{x_1}{x_2})=(2u_1+2\frac{x_1}{x_2})^2$, where we have used the fact that $\frac{x_1}{x_2}\in F(D_1')$. Therefore the residue of $N_{K_a/K}(B)$ at $D'_1$ is trivial, that is, $N_{K_a/K}(\alpha f,g)$ is unramified at $D'_1$. The class $N_{K_a/K}(d,h)=(d,N_{K_a/K}(h))$ is also unramified at $D'_1$, hence $N_{K_a/K}(B)$ is also unramified at $D'_1$. This shows that the Brauer class $N_{K_a/K}(B)\in \Br(K)$ is unramified on $\A^2_F$. Therefore, by homotopy invariance of $\Br(-)[2]$ \cite[Proposition 2.2]{rost1996chow}, $N_{K_a/K}(B)$ comes from $\Br(F)$, as desired.
\end{proof}

The combination of \Cref{cores-trivial}(b) and \Cref{unramified}(b) yields the following result.

\begin{lemma}\label{unramified-combine}
    There exists $\eta\in F_a^\times$ such that 
    \[
	N_{K_a/K}(\alpha f,g)=(d,N_{K_a/K}(h\eta)) \text{ in $\Br(K)$.}
\]
\end{lemma}

\begin{proof}
    Let $P$ be an $F$-point of $\A^2_F$ and $\pi=(\pi_1,\pi_2)$ be a system of parameters. (If $P=(P_1,P_2)$, we may take $\pi_1=x_1-P_1$ and $\pi_2=x_2-P_2$.)  We specialize the identities of \Cref{unramified}(b) and \Cref{cores-trivial}(b) at $P$. From \Cref{specialize-corestriction} and the fact that the specialization map is a ring homomorphism we deduce that 
$C=(d,N_{K_a/K}(\eta))$ for some $\eta\in F_{a}^\times$, and hence $N_{K_a/K}(\alpha f,g)=(d,N_{K_a/K}(h\eta))$ in $\Br(K)$, as desired.
\end{proof}

The proof of \Cref{x-nu} will follow from \Cref{cores-trivial}(a) and \Cref{unramified-combine} by a specialization argument. The point at which specialization will occur is determined by the following lemma (with $\xi=\eta$).

\begin{lemma}\label{exists-f-point}
    For every $\xi \in F_a$, there exists a unique $F$-point $P_{\xi}\in \A^2_F$ such that, letting $P'_\xi$ be the base change of $P_{\xi}$ to $F_a$, we have $h(P'_{\xi})=\xi$.
\end{lemma}

\begin{proof}
Let $P\coloneqq (P_1,P_2)\in \A^2_F$ be an $F$-point, and let $P'$ be the base change of $P$ to $F_a$. The equation $h(P')=\xi$ reads
\[u_1(\alpha_1 P_2+ \alpha_2 P_1)+(\alpha_1 P_1+c \alpha_2 P_2)=\xi \text{ in $F_a$,}\]
or equivalently
\begin{equation}\label{solvable-if-indip}(\alpha_1+u_1\alpha_2 )P_1 + (u_1\alpha_1 + c \alpha_2)P_2=\xi \text{ in $F_a$.}\end{equation}
We know from \Cref{2by2-matrix} that $\alpha_1 + u_1\alpha_2$ and $u_1\alpha_1 + c \alpha_2$ are linearly independent over $F$, hence (\ref{solvable-if-indip}) has a unique solution $(P_1,P_2)\in F\times F$. 
\end{proof}

We are now in a position to prove \Cref{x-nu}. By \Cref{exists-f-point}, there exists an $F$-point $P\in \A^2_F$ such that $h(P')=\eta$, where $P'$ is the base change of $P$ to $F_a$. Let $\pi=(\pi_1,\pi_2)$ be an ordered system of parameters of the local ring at $P$. (As in the proof of \Cref{unramified-combine}, if $P=(P_1,P_2)$, then we may take $\pi_1=x_1-P_1$ and $\pi_2=x_2-P_2$.) Let $\pi'=(\pi'_1,\pi'_2)$ be the system of parameters obtained by base change of $\pi$ to $F_a$. We also let $x\in F^\times$ and $\nu\in F_a^\times$ be such that $(x)= s_{P,\pi}(f)$ and $(\nu)=s_{P',\pi'}(g)$. We obtain the following string of equalities in $\Br(F)$:
\begin{align*}
N_{F_a/F}(\alpha x,\nu)&=s_{P',\pi'}(N_{K_a/K}(\alpha f,g))\\
&=s_{P',\pi'}(d,N_{K_a/K}(h\eta))\\
&=(d,N_{F_a/F}(h(P')\eta))\\
&=(d,N_{F_a/F}(\eta)^2)\\
&=0.
\end{align*}
Here the first and third equalities follow from \Cref{specialize-corestriction}, the second equality follows from \Cref{unramified-combine} and the fourth equality from the fact that $h(P')=\eta$. On the other hand, \Cref{specialize-corestriction} and \Cref{cores-trivial}(a) yield 
\[
(\alpha x,\delta)=(\alpha x,\nu)\text{ in $\Br(F_{a,d})$.}
\]
This completes the proof.   
\end{proof}

\begin{rmk}\label{ideas}
    In the proof of \Cref{x-nu}, we chose $f$ to be a norm from $K_c^\times$, hence $x$, being obtained by specialization from $f$, belongs to $N_{F_c/F}(F_c^\times)$. Since this choice is quite surprising and at the same time crucial for our argument, we explain the reasoning that led us to it.

    We first recall our general strategy, based on \Cref{u5-rephrase}: we start with $(\alpha, \delta)\in\Br(F_{a,d})$ which belongs to the image of $\Br(F)[2]\to \Br(F_{a,d})[2]$, and we want to find $x,y\in F^\times$ so that $(\alpha x, \delta y)=0$ in $\Br(F_{a,d})$.

    If no restrictions on $x,y$ are made, it will often happen that $(\alpha x, \delta y)\in \Br(F_{a,d})$ no longer comes from $\Br(F)[2]$. Indeed, if $(\alpha x, \delta y)\in \Br(F_{a,d})$ comes from $\Br(F)[2]$, then its corestrictions to $F_a$, $F_{d}$ and $F_{ad}$ are trivial (the converse is also true). It turns out that the latter condition is equivalent to the existence of $n_a\in N_{F_a/F}(F_a^\times)$, $n_b\in N_{F_b/F}(F_b^\times)$, $n_c\in N_{F_c/F}(F_c^\times)$, $n_d\in N_{F_d/F}(F_d^\times)$, $n_{ac}\in N_{F_{ac}/F}(F_{ac}^\times)$ and $n_{bd}\in N_{F_{bd}/F}(F_{bd}^\times)$ such that
    \begin{equation}\label{system-norms}x= n_c n_{ac},\qquad y=n_b n_{bd},\qquad n_{ac}n_{bd}=n_an_d.\end{equation}
If the solutions to (\ref{system-norms}) could be rationally parametrized, one could then plug the parametrization into $(\alpha x,\delta y)$ and hope to find $x$ and $y$ this way. The condition $n_{ac}n_{bd}=n_an_d$, however, determines a non-rational variety. Our hope then becomes to at least parametrize a suitable subset of the solutions of (\ref{system-norms}): this subset should be large enough as to contain our objective $x$ and $y$, but also small enough as to allow a simple parametrization.

In \cite[Theorem 1.3]{merkurjev2022degenerate}, where we solved the case when $ad$ is a square, we avoided this problem by choosing $x=1$, $y=n_b$ (so that $N_{F_a/F}(\alpha,\delta y)=N_{F_a/F}(\alpha,y)=(b,y)=0$) and $n_a=n_c=n_d=n_{ac}=n_{bd}=1$.

In the general case, after several tries, we chose $x=n_c$, $y=n_b$ and $n_a=n_d=n_{ac}=n_{bd}=1$. Therefore $x$ is of the form $x_1^2-cx_2^2$, and we need to find a condition for the triviality of $(\alpha x, \delta y)$ in $\Br(F_{a,d})$. It is pure luck that this condition is a system of linear equations and that this system can be solved.
\end{rmk}

\section{Proof of Theorem \ref{mainthm}}

\begin{proof}[Proof of \Cref{mainthm}]
	If $\on{char}(F)=2$ then by \cite[Theorem 9.1]{galois2002koch} the maximal pro-$2$-quotient of the absolute Galois group of $F$ is free, hence by \cite[Remark 4.1]{minac2017triple} all mod $2$ Massey products over $F$ are defined and vanish. We may thus assume that $\on{char}(F)\neq 2$, so that by Kummer Theory the characters $\chi_1,\chi_2,\chi_3,\chi_4$ correspond to scalars $a,b,c,d\in F^\times$, uniquely determined up to nonzero squares.

    Let $a,b,c,d\in F^\times$, and suppose that the Massey product $\ang{a,b,c,d}$ is defined. Suppose first that $c$ is a square in $F$, and let $\delta=1$. Then $N_{F_d/F}(\delta)=1=c$ in $F^\times/F^{\times 2}$. Moreover, since $\ang{a,b,c,d}$ is defined we know that $(a,b)=0$ in $\Br(F)$, hence by \Cref{cup-norm} there exists $\alpha\in F_a^\times$ such that $N_{F_a/F}(\alpha)=b$. We have $(\alpha,\delta)=0$ in $\Br(F_{a,d})$, and so by \Cref{u5-rephrase}(a) the Massey product $\ang{a,b,c,d}$ vanishes.

    Suppose now that $c$ is not a square in $F$. By \Cref{u5-rephrase}(b) we know that there exist $\alpha\in F_a^\times$ and $\delta\in F_d^\times$ such that $N_{F_a/F}(\alpha)=b$ in $F^\times/F^{\times 2}$ and $N_{F_d/F}(\delta)=c$ in $F^\times/F^{\times 2}$ and such that $(\alpha,\delta)$ belongs to the image of $\Br(F)[2]\to\Br(F_{a,d})[2]$. Let $v\in F^\times$ be such that $N_{F_d/F}(\delta)=cv^2$. The Massey product $\ang{a,b,c,d}$ is defined (resp. vanishes) if and only if $\ang{a,b,cv^2,d}$ is defined (resp. vanishes), hence we may replace $c$ by $cv^2$ and thus assume that $N_{F_d/F}(\delta)=c$. Since $c$ is not a square in $F$, the assumptions of \Cref{x-nu} are now satisfied, hence there exist $x\in F^\times$ and $\nu\in F_a^\times$ such that 	
	\begin{enumerate}
		\item $(\alpha x,\delta)=(\alpha x,\nu)$ in $\Br(F_{a,d})$ and
		\item $N_{F_a/F}(\alpha x,\nu)=0$ in $\Br(F)$.
	\end{enumerate} 
By \Cref{albert}, (2) implies the existence of $y\in F^\times$ such that
	$(\alpha x,\nu y)=0$ in $\Br(F_a)$. Therefore by (1) we have $(\alpha x,\delta)=(\alpha x,\nu)=(\alpha x,y)$ in $\Br(F_{a,d})$,	and hence $(\alpha x,\delta y)=0$ in $\Br(F_{a,d})$. By \Cref{u5-rephrase}, this means that the Massey product $\ang{a,b,c,d}$ vanishes, as desired.
\end{proof}

\begin{rmk}\label{odd-degree}
    (1) Let $F$ be a field of characteristic different from $2$ and $a,b,c,d\in F^\times$. In \cite[Corollary 3.9]{merkurjev2022degenerate}, we proved that the mod $2$ Massey product $\ang{a,b,c,d}$ is defined over $F$ if and only if it is defined over some odd-degree field extension of $F$. Combining this with \Cref{mainthm}, we deduce that $\ang{a,b,c,d}$ vanishes over $F$ if and only if it vanishes over some odd-degree field extension of $F$. 

    (2) Recall that a splitting variety for $\ang{a,b,c,d}$ is an $F$-variety $X$ such that for all field extensions $K/F$, $\ang{a,b,c,d}$ vanishes over $K$ if and only if $X(K)$ is not empty. Let $\alpha\in F_a^\times$ and $\delta\in F_d^\times$ be such that $N_{F_a/F}(\alpha)=b$ and $N_{F_d/F}(\delta)=c$. Consider the $F$-variety $\G_{\on{m}}^2\times R_{F_{a,d}/F}(\A^2_{F_{a,d}})$, where $R_{F_{a,d}/F}(-)$ denotes the Weil restriction functor, $\G_{\on{m}}^2$ has coordinates $(s,t)$ and $R_{F_{a,d}/F}(\A^2_{F_{a,d}})$ has coordinates $(u,v)$. Let $X\subset \G_{\on{m}}^2\times R_{F_{a,d}/F}(\A^2_{F_{a,d}})$ be the subvariety defined by the equation $s\alpha u^2+t\delta v^2=1$: according to  \cite[Below Theorem A.1]{guillot2018fourfold}, it is $6$-dimensional, smooth and geometrically rational. It is known that $X$ is a splitting variety for $\ang{a,b,c,d}$; see \cite[Theorem 5.6 and A.1]{guillot2018fourfold}. It follows from (1) that if $X$ has a zero-cycle of odd degree, then $X$ has an $F$-point. We do not know how to prove this property directly, without recourse to \Cref{mainthm}. (One should note that the equation $s\alpha u^2+t\delta v^2=1$ is really a system of four equations over $F$.)
\end{rmk}


\begin{thebibliography}{KMRT98}

\bibitem[Dwy75]{dwyer1975homology}
William~G. Dwyer.
\newblock Homology, {M}assey products and maps between groups.
\newblock {\em J. Pure Appl. Algebra}, 6(2):177--190, 1975.

\bibitem[Efr14]{efrat2014zassenhaus}
Ido Efrat.
\newblock The {Z}assenhaus filtration, {M}assey products, and representations
  of profinite groups.
\newblock {\em Adv. Math.}, 263:389--411, 2014.

\bibitem[EM17]{efrat2017triple}
Ido Efrat and Eliyahu Matzri.
\newblock Triple {M}assey products and absolute {G}alois groups.
\newblock {\em J. Eur. Math. Soc. (JEMS)}, 19(12):3629--3640, 2017.

\bibitem[GMT18]{guillot2018fourfold}
Pierre Guillot, J\'{a}n Min\'{a}\v{c}, and Adam Topaz.
\newblock Four-fold {M}assey products in {G}alois cohomology.
\newblock {\em Compos. Math.}, 154(9):1921--1959, 2018.
\newblock With an appendix by Olivier Wittenberg.

\bibitem[GS17]{gille2017central}
Philippe Gille and Tam\'{a}s Szamuely.
\newblock {\em Central simple algebras and {G}alois cohomology}, volume 165 of
  {\em Cambridge Studies in Advanced Mathematics}.
\newblock Cambridge University Press, Cambridge, 2017.
\newblock Second edition.

\bibitem[HW15]{hopkins2015splitting}
Michael~J. Hopkins and Kirsten~G. Wickelgren.
\newblock Splitting varieties for triple {M}assey products.
\newblock {\em J. Pure Appl. Algebra}, 219(5):1304--1319, 2015.

\bibitem[HW19]{harpaz2019massey}
Yonatan Harpaz and Olivier Wittenberg.
\newblock The {M}assey vanishing conjecture for number fields.
\newblock {\em arXiv preprint arXiv:1904.06512}, 2019. To appear in \emph{Duke Math. Journal}.

\bibitem[Koc02]{galois2002koch}
Helmut Koch. 
\newblock {\em Galois theory of $p$-extensions}. \newblock With a foreword by I. R. Shafarevich. Translated from the 1970 German original by Franz Lemmermeyer. With a postscript by the author and Lemmermeyer. 
\newblock {\em Springer Monographs in Mathematics}. Springer-Verlag, Berlin, 2002

\bibitem[KMRT98]{knus1998book}
Max-Albert Knus, Alexander Merkurjev, Markus Rost, and Jean-Pierre Tignol.
\newblock {\em The book of involutions}, volume~44 of {\em American
  Mathematical Society Colloquium Publications}.
\newblock American Mathematical Society, Providence, RI, 1998.
\newblock With a preface in French by J. Tits.

\bibitem[Lam05]{lam2005introduction}
T.~Y. Lam.
\newblock {\em Introduction to quadratic forms over fields}, volume~67 of {\em
  Graduate Studies in Mathematics}.
\newblock American Mathematical Society, Providence, RI, 2005.

\bibitem[Mas58]{massey1958higher}
W.~S. Massey.
\newblock Some higher order cohomology operations.
\newblock In {\em Symposium internacional de topolog\'{\i}a algebraica
  {I}nternational symposium on algebraic topology}, pages 145--154. Universidad
  Nacional Aut\'{o}noma de M\'{e}xico and UNESCO, Mexico City, 1958.

\bibitem[Mat14]{matzri2014triple}
Eliyahu Matzri.
\newblock Triple Massey products in Galois cohomology.
\newblock {\em arXiv preprint arXiv:1411.4146}, 2014.

\bibitem[MS22]{merkurjev2022degenerate}
Alexander Merkurjev and Federico Scavia.
\newblock Degenerate fourfold {M}assey products over arbitrary fields.
\newblock {\em arXiv preprint arXiv:2208.13011}, 2022.

\bibitem[MT15a]{minac2015kernel}
J\'{a}n Min\'{a}\v{c} and Nguyen~Duy T\^{a}n.
\newblock The kernel unipotent conjecture and the vanishing of {M}assey
  products for odd rigid fields.
\newblock {\em Adv. Math.}, 273:242--270, 2015.

\bibitem[MT15b]{minac2015triple}
J\'{a}n Min\'{a}\v{c} and Nguyen~Duy T\^{a}n.
\newblock Triple {M}assey products over global fields.
\newblock {\em Doc. Math.}, 20:1467--1480, 2015.

\bibitem[MT16]{minac2016triple}
J\'{a}n Min\'{a}\v{c} and Nguyen~Duy T\^{a}n.
\newblock Triple {M}assey products vanish over all fields.
\newblock {\em J. Lond. Math. Soc. (2)}, 94(3):909--932, 2016.

\bibitem[MT17a]{minac2017counting}
J\'{a}n Min\'{a}\v{c} and Nguyen~Duy T\^{a}n.
\newblock Counting {G}alois {$\mathbb{U}_4(\mathbb{F}_p)$}-extensions using
  {M}assey products.
\newblock {\em J. Number Theory}, 176:76--112, 2017.

\bibitem[MT17b]{minac2017triple}
J\'{a}n Min\'{a}\v{c} and Nguyen~Duy T\^{a}n.
\newblock Triple {M}assey products and {G}alois theory.
\newblock {\em J. Eur. Math. Soc. (JEMS)}, 19(1):255--284, 2017.

\bibitem[Ros96]{rost1996chow}
Markus Rost.
\newblock Chow groups with coefficients.
\newblock {\em Doc. Math.}, 1:No. 16, 319--393, 1996.

\bibitem[Ser79]{serre1979local}
Jean-Pierre Serre.
\newblock {\em Local fields}, volume~67 of {\em Graduate Texts in Mathematics}.
\newblock Springer-Verlag, New York-Berlin, 1979.
\newblock Translated from the French by Marvin Jay Greenberg.

\end{thebibliography}
\end{document}